\def\sO{\mathcal O}
\def\sV{\mathcal V}
\def\sH{\mathcal H}
\def\sI{\mathcal I}
\def\sK{\mathcal K}
\def\sL{\mathcal L}
\def\sQ{\mathcal Q}
\def\sE{\mathcal E}
\def\sF{\mathcal F}
\def\bP{\mathbb P}
\def\bA{\mathbb A}
\def\bC{\mathbb C}
\def\bZ{\mathbb Z}
\def\logcot{\Omega_X(\log D)}
\def\map{\longrightarrow}
\def\Hom{\mathrm{Hom}}
\def\Ext{\mathrm{Ext}}
\title[a construction of stable bundle]{A construction of stable bundles and reflexive sheaves on Calabi-Yau threefolds}
\author{Baosen Wu and Shing Tung Yau}
\address{Department of Mathematics, Harvard University, Cambridge, MA 02138}
\email{bwu@math.harvard.edu}
\address{Department of Mathematics, Harvard University, Cambridge, MA 02138}
\email{yau@math.harvard.edu}
\newtheorem{prop}{Proposition}
\newtheorem{coro}{Corollary}
\newtheorem{thm}{Theorem}
\newtheorem{conj}{Conjecture}
\newtheorem*{thm*}{Theorem}
\begin{document}

\begin{abstract}
We use Serre construction and deformation to construct stable bundles and reflexive sheaves on Calabi-Yau threefolds.
\end{abstract}

\maketitle

\section*{Introduction}

The goal of this note is to use a deformation technique to construct slope stable bundles and reflexive sheaves on an arbitrary (projective) Calabi-Yau threefold with prescribed Chern classes within a certain range.

For a projective algebraic surface $X$, Gieseker \cite{Gie88} showed that if $c_2>2p_g(X)+4$, then there exists a rank $2$ stable bundle $E$ over $X$ with $c_1(E)=0$ and $c_2(E)=c_2$ using deformation theory of sheaves, which was motivated from the differential geometric counterpart obtain by Taubes \cite{Tau84}. A similar result in the higher rank case was proved by Artamkin \cite{Art90}.

Now for threefolds, we don't have corresponding existence result in such a general formulation. It is well-known that Serre construction and Monad construction are successful tools to construct vector bundles, especially on $\bP^3$. However, for a general Calabi-Yau threefold, little is known about the construction of stable bundles with prescribed Chern classes. In 2006, Douglas, Reinbacher and Yau \cite{DRY06} proposed the following conjecture (abbreviated as DRY-conjecture below) concerning the existence of stable reflexive sheaves on Calabi-Yau threefolds. From now on, we restrict ourself to the case of $c_1=0$.

\begin{conj}\cite[Conjecture 1.1]{DRY06}
Let $X$ be a simply connected Calabi-Yau threefold. Let $c_i\in H^{2i}(X, \bZ)$, i=2,3, and $r>1$ be an integer. Suppose there is an ample class $H\in H^2(X,\mathbb Q)$ such that
\[ c_2=r\big(H^2+\frac{1}{24}c_2(X)\big),\quad c_3<\frac{2^{9/2}}{3}rH^3,\]
then there exists a rank $r$ stable reflexive sheaf $\sE$ on $X$ w.r.t. some polarization and $c_1(\sE)=0$, $c_2(\sE)=c_2$ and $c_3(\sE)=c_3$.
\end{conj}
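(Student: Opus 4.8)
The plan is to realize the prescribed Chern data by a Serre-type construction and then to deform. Since $\mu$-stability is an open condition in flat families and the Chern character is a deformation invariant, it is enough to (i) produce a single reflexive sheaf $\sE_0$ of rank $r$ with $c_1(\sE_0)=0$, $c_2(\sE_0)=c_2$ and $c_3(\sE_0)=c_3$, and (ii) exhibit a flat deformation of $\sE_0$ whose generic member is a $\mu$-stable reflexive sheaf with respect to some polarization. The two numerical hypotheses will be used in step (i), to secure the geometric input of the construction, while the genericity needed in step (ii) will follow the deformation philosophy of Gieseker and Artamkin, adapted to the threefold reflexive setting.

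For step (i), since $\omega_X\cong\sO_X$ and $c_1=0$, the natural device is the generalized Serre construction: I would look for a curve $C\subset X$ with $[C]=c_2$ and realize $\sE_0$ as an extension
\[ 0\map \sO_X^{\oplus(r-1)}\map \sE_0\map \sI_C\map 0 \]
determined by a class $\xi\in\Ext^1(\sI_C,\sO_X^{\oplus(r-1)})$. Then $\rk\sE_0=r$, $c_1(\sE_0)=0$ and $c_2(\sE_0)=[C]=c_2$ hold automatically. By the local-to-global spectral sequence and $\mathcal{E}xt^1(\sI_C,\sO_X)\cong\omega_C$, the class $\xi$ amounts to $r-1$ sections of $\omega_C$; the sheaf $\sE_0$ is locally free exactly where these generate $\omega_C$ and is reflexive-but-not-locally-free at the points where they fail to, and it is precisely this degeneracy locus, together with the arithmetic genus of $C$, that produces $c_3(\sE_0)$. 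Choosing $\xi$ suitably non-degenerate (a higher Cayley--Bacharach condition) makes $\sE_0$ reflexive and lets one prescribe $c_3$.

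The crux of step (i) is thus an existence statement in the Hilbert scheme: one must exhibit a curve $C$ in the class $c_2=r\big(H^2+\frac{1}{24}c_2(X)\big)$ whose genus and singularities realize the target value of $c_3$. This is where the hypotheses enter. The shape $c_2=r\big(H^2+\frac{1}{24}c_2(X)\big)$ is tailored so that $C$ can be built from complete-intersection-type curves attached to the ample class $H$, while the inequality $c_3<\frac{2^{9/2}}{3}rH^3$ should be exactly the range in which such a $C$ — equivalently, the needed distribution of genus and non-locally-free points — exists, the upper bound playing the role of a Castelnuovo-type genus bound. I would make this quantitative by a Riemann--Roch computation on $X$ expressing $c_3(\sE_0)$ through $\chi(\sE_0)$ and the genus of $C$, followed by a direct construction of $C$ in linear systems generated by $H$.

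Finally, for step (ii) I would deform $\sE_0$, whose infinitesimal deformations are governed by $\Ext^1(\sE_0,\sE_0)$ with obstructions in $\Ext^2(\sE_0,\sE_0)$; along any connected flat family the Chern character stays constant, so $c_1,c_2,c_3$ remain the prescribed ones. The sheaf of (i) contains $\sO_X^{\oplus(r-1)}$ as a subsheaf of the same slope $0=\mu(\sE_0)$, hence is not stable, so the content of (ii) is to show that a generic deformation leaves the non-stable locus while staying reflexive. The main obstacle is exactly this genericity: if the generic deformation remained non-stable, its maximal destabilizing subsheaf would itself deform, and one must bound the dimension of the locus of such sheaves inside the versal deformation and contradict $\dim\Ext^1(\sE_0,\sE_0)$ — in the spirit of Gieseker and Artamkin, but now with the nonzero obstruction space $\Ext^2(\sE_0,\sE_0)$ that is typical of threefolds and must be controlled. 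At the same time one must check that the generic member is genuinely reflexive and not merely torsion-free, since a non-reflexive member would have the wrong reflexive hull; bounding the non-reflexive locus in the family is an additional delicate ingredient. I expect this combined deformation-to-stability-and-reflexivity step to be the hardest part of the argument.
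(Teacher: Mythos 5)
The statement you were given is not a theorem of the paper at all: it is the DRY conjecture, quoted from \cite{DRY06} and left open. The paper proves only a special case (Theorem \ref{result}): for a very ample divisor $D$ it produces rank $r$ stable reflexive sheaves with $c_1=0$, $c_2=n[D^2]$, $c_3=2nD^3$, $n\ge r$, by a Serre construction on a union of curves lying in a pencil of divisors, followed by a deformation argument (Theorem \ref{main}, Propositions \ref{ini} and \ref{ind}). So there is no proof in the paper to compare yours against, and your proposal should be judged as an attempt on an open problem.

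As such, your outline is the right general strategy --- indeed it is the same Serre-construction-plus-deformation philosophy the authors use for their partial result --- but it has two genuine gaps that prevent it from being a proof. The decisive one is step (i): you never produce the curve $C$. You assert that the hypotheses $c_2=r\big(H^2+\frac{1}{24}c_2(X)\big)$ and $c_3<\frac{2^{9/2}}{3}rH^3$ ``should be exactly the range'' in which a curve with the required homology class, arithmetic genus and degeneracy data exists, but no existence argument is given; on a general simply connected Calabi--Yau threefold it is not known which classes in $H^4(X,\bZ)$ are represented by curves at all, let alone with prescribed genus, and this is precisely why the paper restricts to classes $n[D^2]$ where complete-intersection curves are available. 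Relatedly, the extension class for a rank $r$ Serre construction with $c_1=0$ lives in $\Ext^1(\sI_C\otimes\sL,\sO_X^{\oplus(r-1)})$ and requires vanishing/generation hypotheses ($\omega_C\otimes\sL_C^{-1}$ globally generated off finitely many points, cf.\ Proposition \ref{Hart}) that you do not verify. The second gap is step (ii): ``generic deformation becomes stable'' is the technical heart of the matter and does not follow from openness of semistability alone, since one must also rule out that the whole family stays strictly semistable. The paper does not attempt a dimension count on the versal deformation space (which would require controlling $\Ext^2(\sE_0,\sE_0)$, as you note); instead it builds an explicit one-parameter family $\sE_t$ interpolating between an extension admitting a section and one with $H^0=0$, and kills potential destabilizing sub- and quotient line bundles using $H^1(\sO_X)=0$ and semicontinuity of $H^0$. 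Without either ingredient carried out, your proposal remains a research plan rather than a proof, consistent with the statement's status as a conjecture.
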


The main result in this note is Theorem \ref{main} in section $2$. It gives a sufficient condition on a curve $C$ in a Calabi-Yau threefold $X$ so that $\sO_X\oplus \sI_C$ deforms to a stable reflexive sheaf. As an application, we get the following theorem.

\begin{thm*}[Theorem \ref{result}]
Let $D\subset X$ be a very ample divisor on a Calabi-Yau threefold $X$. Then there exists a rank $r$ stable reflexive sheaf $\sE$ on $X$ with $c_1(\sE)=0$, $c_2(\sE)=n[D^2]$ and $c_3(\sE)=2nD^3$ for $n\ge r$.
\end{thm*}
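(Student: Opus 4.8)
The plan is to deduce Theorem~\ref{result} from the rank-two deformation result of Theorem~\ref{main} by exhibiting an explicit curve $C$ with the correct invariants, and then to pass from rank two to rank $r$. First I would construct $C$. Since $D$ is very ample, Bertini's theorem produces, for each $i=1,\dots,n$, a smooth complete-intersection curve $C_i=D_1^{(i)}\cap D_2^{(i)}$ cut out by general members $D_1^{(i)},D_2^{(i)}\in|D|$. Because $|D|$ is base-point free and its members cover $X$, a general choice makes the $C_i$ pairwise disjoint: a general $D_1^{(i)}$ meets a fixed $C_j$ in finitely many points, and a general $D_2^{(i)}$ then avoids those points, so $C_i\cap C_j=\emptyset$, consistent with the expected intersection dimension $1+1-3<0$. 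Set $C=\bigsqcup_{i=1}^n C_i$.

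Next I would record the invariants. For one component $[C_i]=D^2$ in $H^4(X,\bZ)$, and adjunction on the Calabi-Yau threefold gives $\omega_{C_i}=\sO_{C_i}(2D)$, hence $2g(C_i)-2=2D^3$ and $\chi(\sO_{C_i})=-D^3$. By additivity over the disjoint components, $[C]=nD^2$ and $\chi(\sO_C)=-nD^3$. The sequence $0\map\sI_C\map\sO_X\map\sO_C\map 0$ together with a Chern-character comparison then gives $c_1(\sI_C)=0$, $c_2(\sI_C)=[C]=nD^2$ and $c_3(\sI_C)=-2\chi(\sO_C)=2nD^3$, so $\sO_X\oplus\sI_C$ carries precisely the Chern classes $c_1=0$, $c_2=nD^2$, $c_3=2nD^3$ demanded; adjoining trivial summands leaves these unchanged.

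With $C$ fixed I would verify the hypotheses of Theorem~\ref{main}. These amount to smoothness of $C$ together with the cohomological vanishings that allow the split sheaf to be deformed off its trivial sub-line bundles, and I expect each of them to follow from the very ampleness of $D$: Serre vanishing controls the relevant $H^1$ and $H^2$ once the twist is a sufficiently positive multiple of $D$, while the complete-intersection presentation makes the conormal sequence of $C$, and hence the needed normal-bundle cohomology, entirely explicit. Theorem~\ref{main} then yields a rank-two stable reflexive sheaf $\sE_2$ with $c_1=0$, $c_2=nD^2$, $c_3=2nD^3$, settling the case $r=2$.

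For general $r$ I would, following the deformation philosophy of the paper, deform the rank-$r$ sheaf $\sO_X^{\oplus(r-2)}\oplus\sE_2$; its Chern classes are again $c_1=0$, $c_2=nD^2$, $c_3=2nD^3$, since the trivial summands contribute nothing. The main obstacle is to show that a generic deformation is simultaneously stable and reflexive. Infinitesimally one needs enough nonsplit directions in which to absorb each trivial summand, measured by $\Ext^1(\sE_2,\sO_X)$ and $\Ext^1(\sO_X,\sE_2)=H^1(X,\sE_2)$; equivalently one may realize $\sE$ as an iterated nonsplit extension
\[ 0\map \sO_X\map \sE_{k+1}\map \sE_k\map 0,\qquad k=2,\dots,r-1. \]
Using Serre duality on the Calabi-Yau threefold and Riemann-Roch, the dimensions of these Ext groups grow with $c_2=nD^2$, and I expect the bookkeeping to show that they remain positive through all $r-2$ steps precisely in the range $n\ge r$, which is the hypothesis. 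Granting this, genericity of the extension classes rules out a destabilizing subsheaf of the deformed sheaf (such a subsheaf would, by upper semicontinuity of slopes, descend to one of the $\sE_k$ or split an extension), and a dimension count keeps the generic member reflexive. This joint control of stability and reflexivity in higher rank is, I expect, the delicate heart of the argument.
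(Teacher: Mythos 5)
There are two genuine gaps. First, in the rank--$2$ step you never engage with the actual hypotheses of Theorem~\ref{main}. Besides $H^0(\sI_C(D))=0$ and the existence of a generically generating section of $\omega_C(-D)$, Theorem~\ref{main} requires that the scheme-theoretic intersection $C\cap D$ be contained in the zero locus of a \emph{single} section of $N_{D/X}$. For your curve --- a disjoint union of $n$ general complete intersections $D_1^{(i)}\cap D_2^{(i)}$ --- the set $C\cap D$ consists of $nD^3$ points in general position on $D$, and for $n$ large these impose more conditions than $h^0(N_{D/X})=h^0(\sO_D(D))$ can absorb, so no such section exists. This hypothesis is exactly why the paper does \emph{not} take general complete intersections: it fixes a pencil in $|D|$ containing $D$ with base curve $C_0=D\cap S_1$, takes the $C_i$ to be deformations of $C_0$ inside other members $S_i$ of the pencil, and thereby forces $C\cap D\subseteq S_i\cap D= C_0$, which \emph{is} the zero locus of the section of $N_{D/X}$ cut by $S_1$. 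Your Chern class computation ($c_2=n[D^2]$, $c_3=2nD^3$) is correct, but the curve must be chosen with this extra care before Theorem~\ref{main} applies.

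Second, your induction on the rank is both different from the paper's and, as written, does not work. An iterated extension $0\map\sO_X\map\sE_{k+1}\map\sE_k\map 0$ contains $\sO_X$ as a subsheaf of slope equal to $\mu(\sE_{k+1})=0$, so it is never slope stable, no matter how nonsplit the extension class is; stability can only be gained by deforming \emph{away} from all such extensions, which is the delicate point you defer. Moreover, your claim that an Ext-dimension count singles out ``precisely the range $n\ge r$'' is unsupported, and would be suspicious if true: your scheme keeps $c_2=n[D^2]$ fixed while raising the rank, so if it worked it would prove the stronger statement with $n\ge 2$ for all $r$. In the paper the constraint $n\ge r$ has a completely different, structural origin: each rank increase is performed by first making an elementary modification $\sF_r=\ker(\sE_r\map\sO_{C'})$ along a curve $C'$ of class $[D^2]$, producing a stable \emph{non-reflexive} rank $r$ sheaf with $c_2$ increased by $[D^2]$, and then deforming $\sO_X\oplus\sF_r$ to a stable reflexive rank $r+1$ sheaf via Proposition~\ref{ind} (the analogue of Theorem~\ref{main} with $\sI_C$ replaced by $\sF_r$). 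Reaching rank $r$ with $c_2=n[D^2]$ therefore requires starting at rank $2$ with $c_2=(n-r+2)[D^2]$, whence $n\ge r$. Your proposal is missing both the elementary-modification mechanism and any substitute for the verification that the modified sheaf satisfies the hypotheses needed to absorb the next trivial summand.
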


Note that we use $[D^2]$ to denote a curve class and its Poincar\'{e} dual cohomology class interchangeably.

In general, testing stability of a bundle or a sheaf is a difficult problem. Our approach is inspired by the idea of Huybrechts \cite{Huy95}, where he studied sufficient conditions that $\sO_X\oplus T_X$ deforms to a stable bundle on a Calabi-Yau threefold $X$. The advantage of this approach, just as the one used by Gieseker and Artamkin, is that the constructed reflexive sheaf is stable with respect to \emph{all} polarization of $X$.

To deduce Theorem $2$, we first use the fact that $D$ is very ample to choose a suitable representative curve in the class $n[D^2]\in H_2(X,\bZ)$ with $n\ge 2$ so that it fulfill the conditions in Theorem $1$. Then we get a rank $2$ stable reflexive sheaf with $c_1=0$ and $c_2=n[D^2]$. Finally we complete the proof by induction on the rank.

This paper is organized as follows. In section $1$ we study the deformation of $\sO_X\oplus T_X$ to a stable bundle on a Calabi-Yau threefold. This question has other motivation and applications. Here we use it as a toy model for the later construction. In section $2$ we generalize the method to construct stable reflexive sheaves. Although the approach is motivated from the result in section $1$, this section is self-contained. Finally in section $3$ we discuss briefly further research directions. Some applications to elliptic fibered Calabi-Yau will come up later.

Acknowledgement. B.W. would like to thank Peng Gao, Xiaowei Wang for helpful discussions. This work was done when the authors were visiting the Department of mathematics in National Taiwan University, we would like to thank for the hospitality of the department during their visit.

\section{Deforming $\sO_X\oplus T_X$ to a stable bundle}

In 1980's, Witten \cite{Wit86} asked the following question: can one deform the bundle $\sO_X\oplus T_X$ on a Calabi-Yau threefold to a stable bundle so that its restriction to any rational curve is nontrivial? This is list as Problem 77 in Yau's problem list \cite{Yau93}.

Due to the lack of knowledge about rational curves in Calabi-Yau threefolds, this problem is still open. However, Huybrechts \cite{Huy95} made some progress for certain Calabi-Yau threefolds. Now if we consider only the first half of the question: can one deform the direct sum $\sO_X\oplus T_X$ to a slope stable bundle? then \cite{Huy95} and \cite{LiYau05} gave satisfactory answers already.

We aim to generalize Huybrechts' result in the following formulation. This also serves as a warm-up for the construction in the next section. For convenience, we shall work on the dual bundle $\sO_X\oplus \Omega_X$ instead of $\sO_X\oplus T_X$.

\begin{prop}\label{log}
Let $X$ be a Calabi-Yau threefold with a smooth ample divisor $D$. Suppose $H^0(\Omega_X(\log D))=0$, and the natural map $H^1(T_X(-D))\map H^1(T_X)$ is nonzero, then there exists an extension
\[ 0\map \sO_X(-D) \map E \map \logcot \map 0 \]
so that $E$ is slope stable with respect to any polarization of $X$. Furthermore, $E$ is a deformation of $\sO_X\oplus\Omega_X$.
\end{prop}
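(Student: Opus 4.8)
\emph{The plan.} The plan is to build $E$ as a carefully chosen non-split extension, to read off its stability from a subsheaf analysis coupled to the hypothesis on $H^1(T_X(-D))\map H^1(T_X)$, and to produce the deformation by chaining two elementary deformations.

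\emph{Existence of the extension.} First I would compute $\Ext^1(\logcot,\sO_X(-D))$ by applying $\Hom(-,\sO_X(-D))$ to the residue sequence $0\map\Omega_X\map\logcot\map\sO_D\map 0$. This yields a natural restriction map $r\colon\Ext^1(\logcot,\sO_X(-D))\map\Ext^1(\Omega_X,\sO_X(-D))=H^1(T_X(-D))$, and composing with the pushout along $\sO_X(-D)\hookrightarrow\sO_X$ gives exactly the map $H^1(T_X(-D))\map H^1(T_X)$ of the hypothesis. Since this composite is nonzero, I can choose a class $\xi\in\Ext^1(\logcot,\sO_X(-D))$ whose image $\eta\in H^1(T_X)$ is nonzero, and let $E$ be the corresponding extension $0\map\sO_X(-D)\map E\map\logcot\map 0$. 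Because $\logcot$ is locally free ($D$ smooth) and $\sO_X(-D)$ is a line bundle, $E$ is a rank $4$ vector bundle; the Whitney formula together with $c_1(X)=0$ gives $c_1(E)=0$ and $c_2(E)=c_2(\Omega_X)$, matching $\sO_X\oplus\Omega_X$.

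\emph{Stability.} Since $\mu(E)=0$ for every polarization, it suffices to show that every saturated subsheaf $F$ with $1\le\rk F\le 3$ has $c_1(F)\cdot H^2<0$ for all ample $H$; by passing to the induced extensions on $\wedge^2 E$ and $E^\vee$ the ranks $2$ and $3$ reduce to the rank-$1$ case, so the core assertion is that no line bundle $L$ with $c_1(L)\cdot H^2\ge 0$ injects into $E$. Given $L\hookrightarrow E$, consider the composite $L\map\logcot$. If it vanishes then $L\hookrightarrow\sO_X(-D)$, forcing $c_1(L)\cdot H^2\le -D\cdot H^2<0$, a contradiction. If it is nonzero, set $F'=L\cap\sO_X(-D)$ and let $F''\subset\logcot$ be the saturated image; then $\mu(F')<0$, so destabilization would force the extension to split over $F''$, i.e. the image of $\xi$ under $\Ext^1(\logcot,\sO_X(-D))\map\Ext^1(F'',\sO_X(-D))$ would vanish. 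The hypothesis $\eta\ne 0$ is exactly what rules this out: tracing $\xi$ to $\eta$ through the residue sequence shows that $\xi$ cannot die on any subsheaf of $\logcot$ of nonnegative slope, since its image already survives in $H^1(T_X)$. Together with $H^0(\logcot)=0$, which disposes of the borderline case $L=\sO_X$ (the induced section of $\logcot$ must vanish, so $\sO_X\hookrightarrow\sO_X(-D)$, impossible), this gives $c_1(L)\cdot H^2<0$ and hence stability. I expect this coupling step — showing $\xi$ restricts nontrivially to every potentially destabilizing $F''$ — to be the main obstacle, and the place where the precise hypothesis on $H^1(T_X(-D))\map H^1(T_X)$ is indispensable.

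\emph{The deformation.} Finally I would realize $E$ as a deformation of $\sO_X\oplus\Omega_X$ in two moves. Using $\eta\ne 0$, form the non-split extension $0\map\sO_X\map V\map\Omega_X\map 0$; scaling $\eta$ to $0$ shows that $V$ deforms to $\sO_X\oplus\Omega_X$. It then remains to connect $E$ to $V$. Both are governed by compatible classes (namely $\xi$ maps to $\eta$), and the only discrepancy is the twist and the $\log$ pole along $D$; I would build a flat family over $\bA^1$ whose generic fibre is $E$ and whose special fibre is $V$ by gluing the inclusion sequence $0\map\sO_X(-D)\map\sO_X\map\sO_D\map 0$ to the residue sequence $0\map\Omega_X\map\logcot\map\sO_D\map 0$ along their common quotient $\sO_D$, the gluing being legitimate precisely because $\xi$ lifts $\eta$. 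Chaining the deformation of $E$ to $V$ with that of $V$ to $\sO_X\oplus\Omega_X$ completes the argument.
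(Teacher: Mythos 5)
Your construction of the extension and your deformation argument are essentially the paper's (though to choose $\xi$ lifting a class with nonzero image in $H^1(T_X)$ you must first check that the restriction map $\Ext^1(\logcot,\sO_X(-D))\map\Ext^1(\Omega_X,\sO_X(-D))$ is surjective; this follows from $\Ext^2(\sO_D,\sO_X(-D))=0$, i.e.\ Kodaira vanishing, and is not automatic, since your composite is not literally ``the map of the hypothesis''). The genuine gap is in the stability argument, which you run as a direct subsheaf analysis. The central claim --- that because $\xi$ has nonzero image $\eta\in H^1(T_X)=\Ext^1(\Omega_X,\sO_X)$, the class $\xi$ ``cannot die on any subsheaf $F''\subset\logcot$ of nonnegative slope'' --- does not follow. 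The nonvanishing of $\eta$ controls only the restriction of $\xi$ to the particular subsheaf $\Omega_X\subset\logcot$, pushed forward along $\sO_X(-D)\hookrightarrow\sO_X$; for an arbitrary saturated $F''$ of nonnegative degree (or for a line bundle $L$ mapping nontrivially to $\logcot$) the image of $\xi$ in $\Ext^1(F'',\sO_X(-D))$ is simply not determined by $\eta$, and nothing in the hypotheses rules out its vanishing. Moreover the reduction of rank $2$ and rank $3$ subsheaves to the rank $1$ case via $\wedge^2E$ and $E^\vee$ is asserted but not carried out, and it does not interact cleanly with the extension structure you are using (a line subsheaf of $\wedge^2E$ need not arise from the filtration induced by $\sO_X(-D)\subset E$). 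You also never invoke the stability of $\Omega_X$, which must enter somewhere, since the conclusion would fail if $\Omega_X$ itself were unstable.

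The paper sidesteps all of this. It realizes $E=\sE_t$ as a member of a family over $\bP^{1}$ whose special fibre $\sE_0$ is a nonsplit extension of $\Omega_X$ by $\sO_X$, hence strictly semistable; by Maruyama's openness of semistability the nearby $\sE_t$ are semistable, and strict semistability of $\sE_t$ would (using $H^1(\sO_X)=0$ and the stability of $\Omega_X$) force a nonzero map $\sO_X\map\sE_t$ or $\sE_t\map\sO_X$, both excluded by $H^0(\sE_t)=0$ (from the hypothesis $H^0(\logcot)=0$) and $H^0((\sE_t)^{\vee})=0$ (semicontinuity from $H^0((\sE_0)^{\vee})=0$, which holds because $\theta_1$ is nonsplit). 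If you want to keep your direct approach you would have to prove the restriction-nonvanishing statement for every potentially destabilizing subsheaf, which is exactly the step you flagged as the main obstacle; I do not see how to extract it from the stated hypotheses, so I recommend replacing the subsheaf analysis by the openness-of-semistability argument, for which your family over $\bA^1$ is already the right object.
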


\begin{proof}
The strategy of the proof is as follows. We construct a family of vector bundles $\sE$ over $X$ parametrized by $\bP^{\! 1}$, so that $\sE_0$ fits into the following non-splitting short exact sequences
\begin{equation}
\begin{CD}\label{theta}
\theta_1:\quad 0\map \sO_X\map \sE_0\map \Omega_X\map 0 \\
\end{CD}
\end{equation}
Moreover, we demand that for each $t\ne 0$, $\sE_t$ fits into the following
\[ 0\map \sO_X(-D) \map \sE_t \map \logcot \map 0. \]
It follows from the assumption that $\sE_t$ has no global section. We take a generic $\sE_t$ as the required $E$. To check its stability, we shall use the fact that semistability is an open property.

\vskip 5pt

Now we construct the family $\sE$. By assumption, the map $H^1(T_X(-D))\map H^1(T_X)$, or equivalently, $\Ext^1(\Omega_X,\sO_X(-D))\map \Ext^1(\Omega_X,\sO_X)$ is nonzero, we can find $\rho\in\Ext^1(\Omega_X,\sO_X(-D))$ so that its image $\theta_1\in \Ext^1(\Omega_X,\sO_X)$ is not zero. Therefore we get vector bundles $G$ and $\sE_0$, fitting into the following commutative diagram
\begin{equation}
\begin{CD}\label{comm1}
\rho:\quad 0@>>>\sO_X(-D)@>>>G@>>>\Omega_X@>>>0\\
@. @VVV @VVV @| \\
\theta_1:\quad 0@>>>\sO_X@>>>\sE_0@>>>\Omega_X@>>>0\\
\end{CD}
\end{equation}
such that the horizontal sequences are non-splitting. Here the bottom sequence defines the required element $\theta_1$ in (\ref{theta}). Taking cokernel of the middle vertical map, we get
\begin{equation}\label{eta0}
\eta_0:\quad 0\map G\map \sE_0\map \sO_D\map 0.
\end{equation}

To define $\sE$, we construct another element $\eta_1\in \Ext^1(\sO_D, G)$
\begin{equation}\label{eta1}
\eta_1:\quad 0\map G\map \sE_1\map \sO_D\map 0,
\end{equation}
and let $\sE_t$ be the sheaf defined by the extension $(1-t)\eta_0+t\eta_1\in \Ext^1(\sO_D, G)$. For this, we apply $\Hom(-,\sO_X(-D))$ to the standard sequence
\begin{equation}\label{gamma}
\gamma:\quad 0\map\Omega_X\map\Omega_X(\log D)\map \sO_D\map 0,
\end{equation}
then we obtain the following maps
\begin{equation}\label{map}
\Ext^1(\sO_D,\sO_X(-D))\stackrel{f_1}\map \Ext^1(\Omega_X(\log D),\sO_X(-D))\stackrel{f_2}\map \Ext^1(\Omega_X,\sO_X(-D)).
\end{equation}

Note that $f_2$ is surjective, since $\Ext^2(\sO_D,\sO_X(-D))=0$ by the ampleness of $D$ and Kodaira vanishing theorem. For $\rho$ in (\ref{comm1}), we let $\xi_1\in f_2^{-1}(\rho)$. Then we obtain the following diagram
\begin{equation}
\begin{CD}\label{comm2}
\rho:\quad 0@>>>\sO_X(-D)@>>>G@>>>\Omega_X@>>>0\\
@. @| @VVV @VVV \\
\xi_1:\quad 0@>>>\sO_X(-D)@>>>\sE_1@>>>\Omega_X(\log D)@>>>0\\
\end{CD}
\end{equation}
Taking cokernel of the middle vertical map, we get the required element $\eta_1$ in (\ref{eta1}).

Since $H^0(\sE_0)=\bC$ by the sequence $\theta_1$ in (\ref{comm1}), and $H^0(\sE_1)=0$ by the sequence $\xi_1$ in (\ref{comm2}) and the assumption, and both $\eta_i$ are nonzero, we know that $\eta_0$ and $\eta_1$ are linearly independent in $\Ext^1(\sO_D, G)$, and therefore, they generate a 2-dimensional subspace $V_1$. The required family $\sE$ is defined as the family of vector bundles over $X$ parametrized by $\bP^{\! 1}=\bP(V_1)$ whose fiber $\sE_t$ is given by the sheaf (which is locally free) defined by the extension $(1-t)\eta_0+t\eta_1\in V_1$ .

Finally, we show that $\sE_t$ is stable for $t\ne 0$. Since $\sE_0$ is (strictly) semistable, and by \cite{Mar76} semistability is an open property, we know that $\sE_t$ is semistable. If it is strictly semistable, then by $H^1(\sO_X)=0$ and the stability of $\Omega_X$, we either have a nonzero homomorphism $\sO_X\to \sE_t$, or a nonzero homomorphism $\sE_t\to \sO_X$. Note that by the nonsplitting of the sequence $\theta_1$ in (\ref{comm1}), we have $H^0((\sE_0)^\vee)=0$. It follows from upper semi-continuity that $H^0((\sE_t)^\vee)=0$ for generic $t\ne 0$. Therefore, $\sE_t$ has no global section and nonzero homomorphism to $\sO_X$. Hence it is stable. Simply take $E$ be such an $\sE_t$, we get the required stable bundle.
\end{proof}

In the remainder of this section, we discuss the relation of Proposition \ref{log} with the results of Huybrechts and give an example on quintic threefold. This part will not be used later in this note, the readers who are interested in the construction of stable bundles and reflexive sheaves can skip to the next section.

We first make some remarks about this proof.

(1) The proof works for singular ample divisor $D$ as well. Indeed, what we need in the proof is to find a nontrivial extension
\[ 0\map\Omega_X\map \sV\map \sO_D\map 0, \]
so that $H^0(\sV)=0$. The role played by $\Omega_X(\log D)$ is replaced by that of $\sV$.

(2) The ampleness of $D$ is not essential in the proof. All we need is
\[\Ext^2(\sO_D,\sO_X(-D))=0, \]
which is equivalent to $H^1(\sO_D)=0$ since $X$ is a Calabi-Yau threefold. This implies that the proof works even for some non-projective (nonk\"{a}hler) case as well.

\vskip 5pt

To compare with Huybrechts' results, recall that he \cite{Huy95} gave the following sufficient conditions so that $\sO_X\oplus \Omega_X$ deforms to a stable bundle, in the case where $X\subset M$ is a Calabi-Yau manifold embedded as a hypersurface in $M$:

(I) $0\ne c_1(N_{X/M})\in H^1(\Omega_X)$, where $N_{X/M}=\sO_M(X)|_X$ is the normal bundle of $X$.

(II) The natural map $H^0(N_{X/M})\map H^1(T_X)$ is nonzero.

One can show that if a Calabi-Yau threefold satisfies Huybrechts' conditions (I) and (II), then it also satisfies the assumptions in Proposition \ref{log}, where we take $D$ as the zero divisor of a section of $N_{X/M}$ whose existence is guaranteed by condition (II). We also provide an example (Corollary \ref{example} below) which is not covered by his method. In this sense, Proposition \ref{log} can be regarded as a generalization of his approach.

Before we proceed, we recall Huybrechts' idea. He constructed two families of vector bundles $\sE$ and $\sF$ over $X$ parametrized by $\bA^1$, so that $\sE_t\cong \sF_t$ for $t\ne 0$, and $\sE_0$ is a nontrivial extension of $\Omega_X$ by $\sO_X$, and $\sF_0$ is a nontrivial extension of $\sO_X$ by $\Omega_X$. By the openness of semi-stability, he proved that $\sE_t$ is stable for generic $t$.

Our approach is essential the same. We have constructed the family $\sE$ in the proof. Now we construct another family $\sF$ so that $\sF_0$ fits into a nontrivial
\begin{equation}\label{theta2}
\theta_2:\quad 0\map \Omega_X\map\sF_0\map\sO_X\map 0.
\end{equation}
Recall that the map $f_1$ in (\ref{map}) is injective, the standard sequence
\[0\map \sO_X(-D)\map \sO_X\map \sO_D\map 0 \]
defines a nonzero element in $\Ext^1(\sO_D,\sO_X(-D))$, its image under $f_1$, say $\xi_0$, is a nontrivial element in $\Ext^1(\Omega_X(\log D),\sO_X(-D))$. In conclusion, we have the following commutative diagram
\begin{equation}
\begin{CD}
@. @.\Omega_X @= \Omega_X\\
@.@.@VVV @VVV\\
\xi_0:\quad 0@>>>\sO_X(-D)@>>>\sF_0@>>>\Omega_X(\log D)@>>>0\\
@. @| @VVV @VVV \\
\quad\quad\quad 0@>>>\sO_X(-D)@>>>\sO_X@>>>\sO_D@>>>0\\
\end{CD}
\end{equation}
Here the middle vertical sequence is the required element $\theta_2$ in (\ref{theta2}).

Note that we have obtained an element $\xi_1$ in (\ref{comm2}). Similarly, we can show that $\xi_0$ and $\xi_1$ generate a 2-dimensional subspace $V_2\subset \Ext^1(\Omega_X(\log D),\sO_X(-D))$. We define $\sF$ be the family of vector bundles on $X$ parametrized by $\bP^{\! 1}=\bP(V_2)$ so that the fiber $\sF_t$ is the bundle obtain by the extension $(1-t)\xi_0+t\xi_1\in V_2$.

\vskip 5pt

It remains to show that $\sE_t\cong \sF_t$ for $t\ne 0$. Note that for $\xi_t=(1-t)\xi_0+t\xi_1$, we have $f_2(t^{-1}\xi_t)=f_2(\xi_1)=\rho$. Therefore we get
\begin{equation}
\begin{CD}\label{comm3}
\rho:\quad\quad\quad 0@>>>\sO_X(-D)@>>>G@>>>\Omega_X@>>>0\\
@. @| @VVV @VVV \\
t^{-1}\xi_t:\quad 0@>>>\sO_X(-D)@>>>\sF_t@>u>>\Omega_X(\log D)@>>>0\\
@.@.@VVV @VVvV\\
@.@.\sO_D@=\sO_D
\end{CD}
\end{equation}
Applying $\Hom(\sO_D,-)$ to the short exact sequence given by $\rho$ in the top horizontal sequence above, we get
\begin{equation}
0\map\Ext^1(\sO_D,\sO_X(-D))\map \Ext^1(\sO_D,G)\stackrel{g}\map \Ext^1(\sO_D,\Omega_X)
\end{equation}
For $\gamma$ in (\ref{gamma}), let $W_1\subset \Ext^1(\sO_D,G)$ be the subset of elements $\xi$ with $g(\xi)=\gamma$.
Let $W_2\subset \Ext^1(\Omega_X(\log D),\sO_X(-D))$ be the subset of elements $\xi$ with $f_2(\xi)=\rho$. Notice that $W_i\subset V_i$ are 1-dimensional affine subspaces, and after projectivization, we have $W_i\cong\bP(V_i)-0$.

From diagram (\ref{comm3}), by composing the homomorphisms $u$ and $v$, we see that there is a one-to-one affine map $W_2\map W_1$ which send $\xi_1$ to $\eta_1$. It further implies that $t^{-1}\xi_t$ maps to $s^{-1}\eta_s$ for some $s\ne 0$. Therefore, possibly after relabeling the index, we have $\sE_t\cong\sF_t$ for $t\ne 0$.

\vskip 5pt

To conclude this section, we provide an example.

\begin{coro}\label{example}
Let $X\subset \bP^4$ be a quintic threefold. Let $H\subset \bP^4$ be a hyperplane. Denote $D=X\cap H$. Then we have $H^0(\Omega_X(\log D))=0$, and the natural map $H^1(T_X(-D))\to H^1(T_X)$ is nonzero. Therefore $\sO_X\oplus \Omega_X$ deforms to a stable bundle $E$ fitting into the following
\[ 0\map \sO_X(-D) \map E \map \logcot \map 0. \]
\end{coro}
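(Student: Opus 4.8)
The plan is to verify the two hypotheses of Proposition \ref{log} for the pair $(X,D)$, taking $H$ to be a general hyperplane so that $D=X\cap H$ is a smooth (hence ample) surface by Bertini's theorem. I will use throughout that the quintic $X$ is a simply connected Calabi-Yau threefold, so that $H^0(\Omega_X)=H^{1,0}(X)=0$ and $H^1(\Omega_X)=H^{1,1}(X)$ is one-dimensional, spanned by the hyperplane class.

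For the first hypothesis I would feed the residue sequence $\gamma$ of (\ref{gamma}) into the long exact cohomology sequence. Since $H^0(\Omega_X)=0$ and $H^0(\sO_D)=\bC$, this yields $H^0(\logcot)\cong\ker\big(\delta\colon H^0(\sO_D)\map H^1(\Omega_X)\big)$. The connecting map sends $1$ to the fundamental class $[D]=c_1(\sO_X(D))\in H^{1,1}(X)$, which is nonzero because $D$ is ample (for instance $\int_X[D]^3=\deg X=5>0$). Hence $\delta$ is injective and $H^0(\logcot)=0$.

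For the second hypothesis I would compute both groups via the Jacobian ring. Write $S=\bC[x_0,\dots,x_4]$, let $F$ be the defining quintic, let $J=(\partial F/\partial x_0,\dots,\partial F/\partial x_4)$ be its Jacobian ideal, and set $R=S/J$. Twisting the normal bundle sequence $0\map T_X\map T_{\bP^4}|_X\map\sO_X(5)\map 0$ by $\sO_X(-D)=\sO_X(-1)$, and computing $H^*(T_{\bP^4}|_X)$ and $H^*(T_{\bP^4}(-1)|_X)$ from the Euler sequence together with the Kodaira-type vanishing $H^1(\sO_X(k))=H^2(\sO_X(k))=0$ for $k\ge -1$, I expect to find $H^1(T_{\bP^4}|_X)=H^1(T_{\bP^4}(-1)|_X)=0$. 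Then each of $H^1(T_X)$ and $H^1(T_X(-D))$ is the cokernel of $H^0(T_{\bP^4}|_X)\map H^0(\sO_X(5))$, respectively $H^0(T_{\bP^4}(-1)|_X)\map H^0(\sO_X(4))$, whose image is precisely the degree-$5$ part $J_5$, respectively the degree-$4$ part $J_4$, of the Jacobian ideal (Euler's relation $F\in J$ absorbs the ambiguity modulo $F$). This gives
\[ H^1(T_X)\cong R_5,\qquad H^1(T_X(-D))\cong R_4. \]
Comparing the two normal bundle sequences through the inclusion $\sO_X(-1)\stackrel{\ell}{\map}\sO_X$, where $H=\{\ell=0\}$, then identifies the restriction map $H^1(T_X(-D))\map H^1(T_X)$ with multiplication $R_4\stackrel{\cdot\ell}{\map}R_5$.

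Finally, to see this is nonzero I would use that $R$ is generated in degree $1$, so that $R_5=\sum_{\ell\in S_1}\ell\cdot R_4$; since $R_5\cong H^1(T_X)\ne 0$, some linear form $\ell$ satisfies $\ell\cdot R_4\ne 0$. A general $\ell$ then simultaneously cuts out a smooth $D$ and induces a nonzero map, so Proposition \ref{log} applies. I expect the main obstacle to be the third step: pinning down the two cohomology groups as the graded pieces $R_4$ and $R_5$, and checking that under these identifications the restriction map really is multiplication by $\ell$ — this requires the Euler and normal bundle sequences, the vanishing statements needed to obtain clean cokernels, and the bookkeeping of Euler's relation $F\in J$.
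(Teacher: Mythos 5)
Your proposal is correct, but both halves run along genuinely different lines from the paper. For $H^0(\logcot)=0$, the paper restricts the explicit splitting $\Omega_{\bP^4}(\log H)\cong \sO_{\bP^4}(-1)^{\oplus 4}$ (obtained by pushing the Euler sequence into the log sequence on $\bP^4$) to $X$; you instead take cohomology of the residue sequence $0\map\Omega_X\map\logcot\map\sO_D\map 0$ and observe that the connecting map sends $1$ to $c_1(\sO_X(D))\ne 0$. Your route is shorter and strictly more general: it shows this hypothesis of Proposition \ref{log} is automatic for any smooth ample divisor on a Calabi--Yau threefold with $b_1=0$, whereas the paper's computation is tied to the quintic. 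For the nonvanishing of $H^1(T_X(-D))\map H^1(T_X)$, the paper dualizes to $H^2(\Omega_X)\map H^2(\Omega_X(D))$ and wins by a bare dimension count ($h^0(\sO_X(5))-5\,h^0(\sO_X(1))=100$ exceeds the $55$-dimensional kernel of $H^3(\sO_X(-5))\map H^3(\sO_X(-4))$), while you identify the two groups with the graded pieces $R_5$ and $R_4$ of the Jacobian ring and the map with multiplication by the linear form $\ell$, concluding from $R_5=R_1\cdot R_4\ne 0$. Your identification is the standard Griffiths description and is more conceptual, but note it costs you genericity of $\ell$ (you only get \emph{some}, hence a general, hyperplane), whereas the paper's count works for every hyperplane section since $\cdot\ell:H^0(\sO_X(4))\map H^0(\sO_X(5))$ is injective for any $\ell\ne 0$; for the purpose of the corollary either suffices. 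The bookkeeping you flag as the main obstacle (the vanishings $H^1(T_{\bP^4}|_X)=H^1(T_{\bP^4}(-1)|_X)=0$ from Kodaira and Serre duality, the Euler-relation absorption of the ambiguity mod $F$, and the compatibility of the cokernel identifications with multiplication by $\ell$) all check out, so there is no gap.
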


\begin{proof}
First we show that $H^0(\Omega_X(\log D))=0$. Consider the natural map
\[\Ext^1(\sO_{\bP^4},\Omega_{\bP^4})\map \Ext^1(\sO_{\bP^4},\Omega_{\bP^4}(\log H)).\]
The Euler sequence on $\bP^4$ defines an element in $\Ext^1(\sO_{\bP^4},\Omega_{\bP^4})$. Then we obtain the following diagram
\begin{equation}
\begin{CD}
0@>>>\Omega_{\bP^4}@>>>\sO_{\bP^4}(-1)^{\oplus 5}@>>>\sO_{\bP^4}@>>>0 \\
@.@VVV @VVV @| \\
0@>>>\Omega_{\bP^4}(\log H)@>>> E @>>>\sO_{\bP^4}@>>>0 \\
@.@VVV @VVV \\
@.\sO_H@=\sO_H
\end{CD}
\end{equation}
It follows that $E\cong \sO_{\bP^4}\oplus \sO_{\bP^4}(-1)^{\oplus 4}$, and $\Omega_{\bP^4}(\log H)\cong \sO_{\bP^4}(-1)^{\oplus 4}$. Now from the following
\[ 0\map \sO_X(-5)\map \Omega_{\bP^4}(\log H)|_X\map \Omega_{X}(\log D)\map 0, \]
we obtain $H^0(\Omega_{X}(\log D))=H^0(\Omega_{\bP^4}(\log H)|_X)=0$.

Next we show that $H^1(T_X(-D))\to H^1(T_X)$ is not zero. It is equivalent to the nonvanishing of $H^2(\Omega_X)\to H^2(\Omega_X(D))$. We can find a bundle $F$ which fits into the following diagram
\begin{equation}
\begin{CD}
@.\sO_X(-5)@=\sO_X(-5)@. \\
@.@VVV @VVV @. \\
0@>>>\Omega_{\bP^4}|_X@>>>\sO_{\bP^4}(-1)^{\oplus 5}|_X  @>>>\sO_{\bP^4}|_X@>>>0 \\
@.@VVV @VVV @|\\
0@>>>\Omega_X@>>>F@>>>\sO_X@>>>0
\end{CD}
\end{equation}
Then by $H^i(\sO_X)=H^i(\sO_X(D))=0$ for $i=1,2$, it is equivalent to show that $H^2(F)\to H^2(F(D))$ is nonzero. Now we obtain
\begin{equation}
\begin{CD}
0@>>>\sO_X(-5)@>>>\sO_{X}(-1)^{\oplus 5}@>>>F@>>> 0 \\
@.@VVV @VVV @VVV \\
0@>>>\sO_X(-4)@>>>\sO_{X}^{\oplus 5}@>>>F(D)@>>> 0 \\
\end{CD}
\end{equation}
Taking cohomology of the bundles, we get
\begin{equation}\notag
\begin{CD}
0@>>>H^2(F)@>>>H^3(\sO_X(-5))@>>>H^3(\sO_{X}(-1)^{\oplus 5}) \\
@.@VVV @VVV @VVV \\
0@>>>H^2(F(D))@>>>H^3(\sO_X(-4))@>>>H^3(\sO_{X}^{\oplus 5})  \\
\end{CD}
\end{equation}
A dimension calculation shows that the left vertical map is nonzero. Hence we can apply Proposition \ref{log} to get the result.
\end{proof}

\section{Stable reflexive sheaves on Calabi-Yau threefolds}

In this section we shall construct stable reflexive sheaves on Calabi-Yau threefolds with some given Chern classes.

Recall that a sheaf $\sE$ on $X$ is reflexive if $\sE_p$ is a reflexive $\sO_p$-module for every $p\in X$; and an $A$-module $M$ is reflexive if the canonical homomorphism $M\to (M^{\!\vee})^{\!\vee}$ is an isomorphism. It is well-known that for a reflexive sheaf $\sE$ on a smooth variety $X$, the singular locus $\{p\in X \,\,|\,\, \sE_p \text{ is not locally free}\}$ has codimension at least $3$ in $X$; and every rank $1$ reflexive sheaf is invertible.




Serre construction is a useful tool to construct rank $2$ vector bundles on threefolds. Hartshorne \cite{Hat80} generalized this construction to reflexive sheaves on $\bP^3$, and Vermeire \cite{Ver07} to an arbitrary projective threefold. In particular, for Calabi-Yau threefolds, we have the following

\begin{prop}\cite[Theorem 1]{Ver07}\label{Hart}
Let $X$ be a projective Calabi-Yau threefold. Let $\sL$ be an invertible sheaf with $H^1(X,\sL^{-1})=H^2(X,\sL^{-1})=0$. Then there is a one-to-one correspondence between

(1) a rank $2$ reflexive sheaf $\sF$ on $X$ with $c_1(\sF)=\sL$, and a section $s\in H^0(\sF)$ whose zero locus has codimension $2$, and

(2) a Cohen-Macaulay curve $C\subset X$, generically locally complete intersection, and a global section $\xi\in H^0(\omega_C\otimes \sL_C^{-1})$ which generates $\omega_C\otimes \sL_C^{-1}$ except at finitely many points, where $\sL_C=\sL|_C$.
\end{prop}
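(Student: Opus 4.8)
The plan is to realize the stated correspondence as an instance of Serre's construction, whose two directions are mediated by the short exact sequence
\[ 0\map \sO_X\map \sF\map \sI_C\otimes\sL\map 0. \]
Given a pair $(\sF,s)$ as in (1), the section $s$ defines a map $\sO_X\map\sF$ whose cokernel is a rank $1$ torsion-free sheaf; since $c_1(\sF)=\sL$ and the zero locus $C=Z(s)$ has codimension $2$, this cokernel is exactly $\sI_C\otimes\sL$, producing the sequence above. Conversely, any extension of $\sI_C\otimes\sL$ by $\sO_X$ yields a rank $2$ sheaf $\sF$ together with the distinguished section coming from $1\in H^0(\sO_X)$. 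So the whole statement reduces to (a) identifying the group $\Ext^1(\sI_C\otimes\sL,\sO_X)$ of such extensions with $H^0(\omega_C\otimes\sL_C^{-1})$, and (b) matching the two sets of geometric conditions: reflexivity of $\sF$ and the codimension of its non-locally-free locus on one side, versus Cohen-Macaulayness, generic l.c.i., and the generation property of $\xi$ on the other.

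For (a), I would first use that $\sL$ is invertible to write $\Ext^1(\sI_C\otimes\sL,\sO_X)\cong\Ext^1(\sI_C,\sL^{-1})$, and then compute through the local-to-global spectral sequence $H^p(\mathcal{E}xt^q(\sI_C,\sL^{-1}))\Rightarrow\Ext^{p+q}(\sI_C,\sL^{-1})$. The local ext-sheaves are obtained by applying $\mathcal{E}xt^\bullet(-,\sL^{-1})$ to $0\map\sI_C\map\sO_X\map\sO_C\map 0$: since $C$ is Cohen-Macaulay of codimension $2$ in the smooth threefold $X$, duality gives $\mathcal{E}xt^q(\sO_C,\omega_X)=0$ for $q\ne 2$ and $\mathcal{E}xt^2(\sO_C,\omega_X)=\omega_C$, and the Calabi-Yau condition $\omega_X\cong\sO_X$ then yields $\mathcal{E}xt^0(\sI_C,\sL^{-1})=\sL^{-1}$ together with $\mathcal{E}xt^1(\sI_C,\sL^{-1})\cong\omega_C\otimes\sL_C^{-1}$. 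Feeding this into the spectral sequence, the total degree $1$ group receives $E_2^{1,0}=H^1(\sL^{-1})$ and $E_2^{0,1}=H^0(\omega_C\otimes\sL_C^{-1})$, and the only possibly nonzero differential out of $E_2^{0,1}$ lands in $E_2^{2,0}=H^2(\sL^{-1})$. The two hypotheses $H^1(\sL^{-1})=H^2(\sL^{-1})=0$ are precisely what kill the extra term and the differential, giving the isomorphism $\Ext^1(\sI_C\otimes\sL,\sO_X)\cong H^0(\omega_C\otimes\sL_C^{-1})$.

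It then remains to match the geometry, and this is the step I expect to be the real obstacle. Given $\xi\in H^0(\omega_C\otimes\sL_C^{-1})$, let $\sF$ be the corresponding extension; the main work is the local analysis showing $\sF$ is reflexive and pinning down its non-locally-free locus. The point is that $\sF$ is locally free at a point $p\in C$ exactly when $\xi$ generates $\omega_C\otimes\sL_C^{-1}$ at $p$ (a Cayley--Bacharach type generation condition), whereas at a point where $\xi$ fails to generate, $\sF$ acquires a genuine singularity. The hypothesis that $\xi$ generates outside finitely many points therefore confines the singular locus of $\sF$ to dimension $0$, i.e. codimension $3$, and this together with the $S_2$ property inherited from the extension forces $\sF$ to be reflexive rather than merely torsion-free. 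Carrying this out requires a careful study of the local presentation of $\sF$ and of the depth of $\sF$ along $C$; it is exactly the reflexive refinement of the classical locally-free Serre construction, and the place where "reflexive" in the statement enters essentially.

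Finally, for the reverse direction I would run the argument backwards. Starting from a reflexive pair $(\sF,s)$, the displayed sequence exhibits $C=Z(s)$ as the zero scheme of a section of a reflexive sheaf, and standard depth arguments—reflexivity of $\sF$ forces the cokernel $\sI_C\otimes\sL$ to have the correct depth—show that $C$ is Cohen-Macaulay, purely $1$-dimensional, and generically a local complete intersection. The extension class of the sequence then supplies $\xi\in H^0(\omega_C\otimes\sL_C^{-1})$ via the isomorphism of (a), and the generation property of $\xi$ is dictated by the codimension $3$ of the singularities of $\sF$. Checking that the two assignments $(\sF,s)\mapsto(C,\xi)$ and $(C,\xi)\mapsto(\sF,s)$ are mutually inverse is then a formal consequence of the naturality of the identification established in (a).
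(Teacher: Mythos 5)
Your proposal is sound in outline and reduces the statement to the same two ingredients the paper does: the extension sequence $0\map\sO_X\map\sF\map\sI_C\otimes\sL\map 0$ and the identification $\Ext^1(\sI_C\otimes\sL,\sO_X)\cong H^0(\omega_C\otimes\sL_C^{-1})$. For that identification, however, you take a genuinely different route. The paper computes it by two applications of global Serre duality: first on the Calabi--Yau $X$ (using $\omega_X\cong\sO_X$) to get $\Ext^1(\sI_C\otimes\sL,\sO_X)\cong H^2(\sI_C\otimes\sL)^{\vee}$, then the restriction sequence for $\sL$ on $C$ together with the vanishing $H^1(\sL)=H^2(\sL)=0$ (equivalent, by Serre duality again, to the stated hypotheses on $\sL^{-1}$) to get $H^2(\sI_C\otimes\sL)\cong H^1(\sL_C)$, and finally Serre duality on the Cohen--Macaulay curve $C$. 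You instead run the local-to-global Ext spectral sequence and compute the local Ext sheaves of $\sI_C$ via duality for the codimension-$2$ Cohen--Macaulay scheme $C$; the hypotheses $H^1(\sL^{-1})=H^2(\sL^{-1})=0$ then kill $E_2^{1,0}$ and the target of the differential out of $E_2^{0,1}$. This is the classical Hartshorne-style argument, and it buys you something the paper's chain of global dualities does not make visible: the class $\xi$ is literally a global section of the sheaf $\omega_C\otimes\sL_C^{-1}$ appearing as a local Ext sheaf, which is exactly what makes the pointwise statement ``$\sF$ is locally free at $p$ iff $\xi$ generates at $p$'' meaningful and provable. The paper's version is shorter but leans on the Calabi--Yau condition globally, whereas yours localizes cleanly.

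One caveat: the geometric matching --- reflexivity of $\sF$ and the codimension-$3$ bound on its singular locus versus the generation of $\xi$ off a finite set, and conversely the Cohen--Macaulay and generic l.c.i.\ properties of $Z(s)$ --- is described in your write-up but not actually carried out; you correctly flag it as the real content. This is not a defect relative to the paper, which proves none of it either and defers the entire statement to Vermeire's Theorem~1, but if you intend a self-contained proof this local depth analysis (of the type in Hartshorne's \emph{Stable reflexive sheaves}, Section~4) is the part that still has to be written.
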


Recall that a curve is Cohen-Macaulay if it has equidimension one and has no embedded point.

This correspondence is straightforward. Given a sheaf $\sF$ with a section $s\in H^0(\sF)$, we have
\begin{equation}\label{ext1}
0\map\sO_X\stackrel{s}\map\sF\map\sI_C\otimes \sL\map 0.
\end{equation}
The cokernel of $s$ has the form $\sI_C\otimes \sL$ because the zero locus of $s$ has codimension $2$. It defines an element in
\[\Ext^1(\sI_C\otimes\sL,\sO_X)\cong H^2(\sI_C\otimes\sL)^{\!\vee}\cong H^1(\sL_C)^{\!\vee}\cong H^0(\omega_C\otimes\sL_C^{-1}).\]
Conversely, given a curve $C\subset X$ and a section $\xi\in H^0(\omega_C\otimes\sL_C^{-1})$ as stated in (2) of Proposition \ref{Hart}, it defines an element in $\Ext^1(\sI_C\otimes\sL,\sO_X)$ via the above isomorphism, and hence an extension (\ref{ext1}).

\vskip 5pt

Using this correspondence, we generalize Proposition \ref{log} to obtain the following result.





\begin{thm}\label{main}
Let $C\subset X$ be a Cohen-Macaulay curve in a Calabi-Yau threefold $X$, generically locally complete intersection. Suppose $D\subset X$ is an ample divisor so that $H^0(\sI_C(D))=0$, and the scheme-theoretic intersection $C\cap D$ is zero dimensional which lies in the zero locus of a section of $N_{D/X}$; suppose further that there is a section in $H^0(\omega_C(-D))$ which generates $\omega_C(-D)$ except at finitely many points. Then $\sO_X\oplus \sI_C$ deforms to a stable reflexive sheaf.
\end{thm}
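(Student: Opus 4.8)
The plan is to imitate the construction of Proposition \ref{log}, replacing the pair $(\Omega_X,\Omega_X(\log D))$ by $(\sI_C,\mathcal G)$, where $\mathcal G$ is a rank-one sheaf sitting in a ``residue'' sequence $0\map\sI_C\map\mathcal G\map\sO_D\map 0$ that plays the role of the log cotangent sheaf. I would build a family $\sF$ over $\bP^{\!1}$ whose central fibre $\sF_0$ is a non-split extension $0\map\sO_X\map\sF_0\map\sI_C\map 0$ (this is $S$-equivalent to, and a flat degeneration of, $\sO_X\oplus\sI_C$) and whose generic fibre is a stable reflexive sheaf. The four hypotheses enter as follows: the section $\xi\in H^0(\omega_C(-D))$ produces a ``twisted'' Serre sheaf $G$; the section of $N_{D/X}$ vanishing on $C\cap D$ produces $\mathcal G$; the vanishing $H^0(\sI_C(D))=0$ forces the generic fibre to have no sections; and the Cohen--Macaulay, generically l.c.i.\ hypotheses on $C$ deliver reflexivity through Proposition \ref{Hart}.

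First I would run the Serre construction. Since $\xi\in H^0(\omega_C(-D))=H^0\big(\omega_C\otimes(\sO_X(D)|_C)^{-1}\big)$ generates $\omega_C(-D)$ off a finite set, Proposition \ref{Hart} with $\sL=\sO_X(D)$ yields a rank-two reflexive sheaf $G(D)$, hence a reflexive $G$ with
\[\rho:\quad 0\map \sO_X(-D)\map G\map \sI_C\map 0,\qquad \rho\in\Ext^1(\sI_C,\sO_X(-D)).\]
Pushing $\rho$ along $\sO_X(-D)\hookrightarrow\sO_X$ gives $\theta_1\in\Ext^1(\sI_C,\sO_X)$ and a non-split $\sF_0$ fitting into a diagram as in (\ref{comm1}), whose middle cokernel is $\eta_0:0\map G\map\sF_0\map\sO_D\map 0$. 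Next I would identify $\Ext^1(\sO_D,\sI_C)\cong H^0(N_{D/X}\otimes\sI_{C\cap D,D})$, so that the hypothesised section $\sigma$ of $N_{D/X}$ vanishing on $C\cap D$ supplies a non-split class $\gamma$ and the sheaf $\mathcal G$. Applying $\Hom(-,\sO_X(-D))$ to $\gamma$ and using $\Ext^2(\sO_D,\sO_X(-D))=0$ (ampleness of $D$, as in Proposition \ref{log}), the map $f_2\colon\Ext^1(\mathcal G,\sO_X(-D))\map\Ext^1(\sI_C,\sO_X(-D))$ is surjective; a choice $\xi_1\in f_2^{-1}(\rho)$ produces $\sF_1$ with $0\map\sO_X(-D)\map\sF_1\map\mathcal G\map 0$ and middle cokernel $\eta_1:0\map G\map\sF_1\map\sO_D\map 0$. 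Because $H^0(\sF_0)=\bC$ while $H^0(\sF_1)=0$, the classes $\eta_0,\eta_1$ are independent and span a plane $V\subset\Ext^1(\sO_D,G)$; the family $\sF$ over $\bP(V)=\bP^{\!1}$ with $\sF_t$ defined by $(1-t)\eta_0+t\eta_1$ is the required deformation.

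For the vanishings I would argue as in Proposition \ref{log}. From $0\map\sO_X(-D)\map\sF_1\map\mathcal G\map 0$ and $H^1(\sO_X(-D))=0$ one gets $H^0(\sF_1)\cong H^0(\mathcal G)$; combining $\gamma$ with $H^0(\sI_C)=0$ and the hypothesis $H^0(\sI_C(D))=0$ forces the connecting map $H^0(\sO_D)\map H^1(\sI_C)$ to be nonzero on $\gamma$, whence $H^0(\mathcal G)=0$. Dually, from $\eta_1$ one has $\sF_1^\vee=\ker\big(G^\vee\map N_{D/X}\big)$ using $\mathcal E xt^1(\sO_D,\sO_X)=N_{D/X}$, and the composite $\sO_X\hookrightarrow G^\vee\map N_{D/X}$ is precisely $\sigma\ne 0$, which kills the only candidate section and gives $H^0(\sF_1^\vee)=0$. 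Since $\sF_0$ is strictly semistable with Jordan--H\"older factors $\sO_X$ and $\sI_C$, openness of semistability \cite{Mar76} makes $\sF_t$ semistable near $0$; arguing exactly as in Proposition \ref{log}, using $H^1(\sO_X)=0$ and the stability of $\sI_C$, strict semistability of $\sF_t$ would yield a nonzero homomorphism $\sO_X\map\sF_t$ or $\sF_t\map\sO_X$, and both are excluded for generic $t$ by upper semicontinuity from $H^0(\sF_1)=0$ and $H^0(\sF_1^\vee)=0$. Hence the generic $\sF_t$ is slope stable with respect to every polarization.

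The step I expect to be the main obstacle is reflexivity of the generic fibre, which I would deduce from Proposition \ref{Hart} rather than by hand. Each $\sF_t$ carries a presentation $0\map\sO_X(-D)\map\sF_t\map\mathcal G_t\map 0$ with $\mathcal G_t$ an extension of $\sO_D$ by $\sI_C$; twisting by $\sO_X(D)$ exhibits $\sF_t(D)$ together with a section whose cokernel is $\mathcal G_t(D)$. It therefore suffices to show that $\mathcal G_t(-D)=\sI_{C_t}$ for a Cohen--Macaulay, generically l.c.i.\ curve $C_t$ and that the induced section of $\sF_t(D)$ has zero locus of pure codimension two. Tracing the construction, $C_t$ should be assembled from $C$ and the vanishing divisor of $\sigma$, glued along $C\cap D$; checking that this scheme is Cohen--Macaulay with no embedded points is exactly where the hypotheses that $C$ is Cohen--Macaulay and generically l.c.i., that $C\cap D$ is zero-dimensional, and that $C\cap D$ lies in the zero locus of a section of $N_{D/X}$ are indispensable, and this is the crux. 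Granting it, Proposition \ref{Hart} yields reflexivity of $\sF_t(D)$, hence of $\sF_t$; since the non-split $\sF_0$ flatly degenerates $\sO_X\oplus\sI_C$, the stable reflexive sheaf $\sF_t$ is the asserted deformation.
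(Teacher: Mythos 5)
Your construction of the family is the same as the paper's: the Serre sheaf $G$ from $\theta\in H^0(\omega_C(-D))$, the auxiliary sheaf $\mathcal G$ (the paper's $\sF$) obtained from the section of $N_{D/X}$ vanishing on $C\cap D$, the surjectivity of $f_2$ from $\Ext^2(\sO_D,\sO_X(-D))=0$, and the pencil of extensions in $\Ext^1(\sO_D,G)$ interpolating between $\sF_0$ and $\sF_1$. The genuine gap is exactly the step you flag as ``the crux'': reflexivity of the generic fibre. You try to prove it fibrewise, by exhibiting $\sF_t(D)$ with a section whose cokernel is $\sI_{C_t}(D)$ for a Cohen--Macaulay curve $C_t$ glued from $C$ and the zero divisor of $\sigma$, and you leave unverified that this glued scheme is Cohen--Macaulay and generically l.c.i.\ --- which is delicate and not what the hypotheses are designed for. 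The paper avoids this entirely: the image $f(\theta)\in H^0(\omega_C)$ of $\theta$ under $H^0(\omega_C(-D))\to H^0(\omega_C)$ still generates $\omega_C$ away from a finite set (the cokernel of $\omega_C(-D)\hookrightarrow\omega_C$ is supported on the finite set $C\cap D$), so Proposition \ref{Hart} applied to the \emph{central} fibre $0\to\sO_X\to\sF_0\to\sI_C\to 0$ shows $\sF_0$ itself is reflexive; then reflexivity is an open property in flat families (\cite[Proposition 1.1.10]{HL10}), so $\sF_t$ is reflexive for all but finitely many $t$. You should replace your fibrewise argument by this two-line observation; as written, your proof is incomplete at its acknowledged crux.

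A secondary, smaller issue: in the stability step you assert that strict semistability of $\sF_t$ forces a nonzero map $\sO_X\to\sF_t$ or $\sF_t\to\sO_X$, by analogy with Proposition \ref{log}. For a rank-$2$ sheaf the destabilizing subsheaf is a rank-$1$ torsion-free sheaf $\sL_1$ with $c_1(\sL_1)\cdot H^2=0$, and nothing yet identifies $\sL_1^{\vee\vee}$ with $\sO_X$. The paper closes this by using reflexivity of $\sF_t$ to saturate $\sL_1$ into a line bundle, specializing the destabilizing family to $t=0$ to conclude $\sL_1=\sO_X$ (using $H^1(\sO_X)=0$, so $\sO_X$ does not deform), and then contradicting $H^0(\sF_t)=0$; in particular the computation of $H^0(\sF_1^\vee)$ that you carry out is not needed. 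You should either reproduce that specialization argument or otherwise justify why the destabilizer must be $\sO_X$.
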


\begin{proof}
We follow the same idea in the proof of Proposition \ref{log}. First we show that there is a nontrivial extension
\begin{equation}\label{a1}
0\map \sI_C\map \sF\map \sO_D\map 0
\end{equation}
with $H^0(\sF)=0$. Note that $\sF$ plays the role of $\Omega_X(\log D)$ in Proposition \ref{log}. Consider the standard exact sequence
\[0\map \sO_X\map \sO_X(D)\map\sO_D(D)\map 0. \]
By tensoring it with $\sI_C$ and the assumption that $C\cap D$ is zero dimensional, we obtain a short exact sequence
\begin{equation}\label{b1}
0\map \sI_C\map\sI_C(D)\map \sI_{Z\subset D}(D)\map 0
\end{equation}
where $Z=C\cap D\subset D$ is a closed subscheme of $D$ with ideal sheaf $\sI_{Z\subset D}$ on $D$. By assumption, $Z$ is contained in the zero locus of a section of $N_{D/X}$, that is, we have an injective homomorphism $\sO_D\to \sI_{Z\subset D}(D)$. Now we pull back the sequence (\ref{b1}) via this homomorphism, and get
\begin{equation}
\begin{CD}\label{comm11}
0@>>>\sI_C@>>>\sF@>>>\sO_D@>>>0\\
@. @| @VVV @VVV \\
0@>>>\sI_C@>>>\sI_C(D)@>>>\sI_{Z\subset D}(D)@>>>0\\
\end{CD}
\end{equation}
where the top row is the required sequence (\ref{a1}). In addition, $H^0(\sF)=0$ because $\sF$ is a subsheaf of $\sI_C(D)$ and the assumption $H^0(\sI_C(D))=0$.

Next, since the intersection $C\cap D$ is zero dimensional, the natural homomorphism $H^0(\omega_C(-D))\to H^0(\omega_C)$ is injective. Using Serre duality, we have isomorphisms
\begin{equation}
\begin{CD}\label{comm111}
H^0(\omega_C(-D))@>f>> H^0(\omega_C)\\
@VV\cong V @VV\cong V \\
\Ext^1(\sI_C,\sO_X(-D))@>g>> \Ext^1(\sI_C,\sO_X)
\end{CD}
\end{equation}
By assumption, we have a section $\theta\in H^0(\omega_C(-D))$ which generates $\omega_C(-D)$ except at finitely many points.

\vskip 5pt

Now we apply the strategy in the proof of Proposition \ref{log}. The homomorphism $g$ in diagram (\ref{comm111}) gives a commutative diagram
\begin{equation}
\begin{CD}\label{comm10}
0@>>>\sO_X(-D)@>>>G@>>>\sI_C@>>>0\\
@. @VVV @VVV @| \\
0@>>>\sO_X@>>>\sE_0@>>>\sI_C@>>>0\\
\end{CD}
\end{equation}
where the top row is given by $\theta\in H^0(\omega_C(-D))$ and the bottom row by $f(\theta)$. Since $f(\theta)\in H^0(\omega_C)$ generates $\omega_C$ except at finitely many points, by Proposition \ref{Hart}, we know that $\sE_0$ is reflexive. On the other hand, since $D$ is ample, Kodaira vanishing theorem implies that $\Ext^2(\sO_D,\sO_X(-D))=0$. It follows that the natural homomorphism $\Ext^1(\sF,\sO_X(-D))\to \Ext^1(\sI_C,\sO_X(-D))$ is surjective. Therefore we can find $\sE_1$ fitting into the following
\begin{equation}
\begin{CD}\label{comm20}
\theta:\quad 0@>>>\sO_X(-D)@>>>G@>>>\sI_C@>>>0\\
@. @| @VVV @VVV \\
\quad\quad 0@>>>\sO_X(-D)@>>>\sE_1@>>>\sF@>>>0\\
\end{CD}
\end{equation}

Now consider the central columns of diagram (\ref{comm10}) and (\ref{comm20}), we obtain two extensions
\[\xi_i:\quad 0\map G\map \sE_i\map \sO_D\map 0, \quad i=0,1. \]
We define a family of sheaves $\sE$ on $X$ parametrized by $\bA^1$ so that its fiber $\sE_t$ is a sheaf given by the extension $(1-t)\xi_0+t\xi_1\in\Ext^1(\sO_D,G)$. In addition, by (\ref{comm20}), for $t\ne 0$, $\sE_t$ fits into an exact sequence
\[ 0\map\sO_X(-D)\map\sE_t\map\sF\map 0.\]

Finally, we show that $\sE_t$ is stable for generic $t\in\bA^1$.
From \cite[Theorem 2.8]{Mar76}, semistability is an open property. It follows that if $\sE_t$ is not stable, it must be strictly semistable and we have
\begin{equation}\label{des}
0\map \sL_1\map \sE_t\map\sL_2\map 0,
\end{equation}
for some rank $1$ torsion free sheaves $\sL_i$ on $X$ with $c_1(\sL_i)=0$. Since $\sE_0$ is reflexive, and being reflexive is also an open property \cite[Proposition 1.1.10]{HL10}, $\sE_t$ is reflexive for all but finitely many $t\in \bA^1$. So we can take double dual of the sequence (\ref{des}) and hence assume without loss of generality that $\sL_1$ is invertible. By specializing $t$ to $0$, we know that $\sL_1$ is a deformation of $\sO_X$. Since $H^1(\sO_X)=0$, $\sO_X$ has no nontrivial deformation. It forces $\sL_1=\sO_X$.  On the other hand, since $H^0(\sE_1)=0$, the upper semi-continuity theorem implies that $H^0(\sE_t)=0$ for all but finitely many $t\in \bA^1$. Then
we have $H^0(\sL_1)=0$, which contradicts to $\sL_1=\sO_X$. This proves that $\sE_t$, which is a deformation of $\sO_X\oplus \sI_C$, is a stable reflexive sheaf for generic $t$.
\end{proof}

Here some remarks are in order.

(1) The curve $C\subset X$ must have at least two connected components in order to fulfill the assumptions in the theorem.

(2) The ampleness of $D$ is not essential. As in the proof of Proposition \ref{log}, we only need $H^1(\sO_X(D))=H^2(\sO_X(D))=0$, which is equivalent to $H^1(\sO_D)=0$ since $X$ is a Calabi-Yau threefold.

(3) From diagram (\ref{comm11}), we know that $\sF(-D)$ is an ideal sheaf of a closed subscheme $W\subset X$. Therefore, from the bottom row of (\ref{comm20}), the constructed stable reflexive sheaf $\sE$ fits into
\[ 0\map\sO_X(-D)\map\sE\map\sI_W(D)\map 0\]
which is exactly the Serre construction.

\vskip 5pt

The following is a straightforward application of Theorem \ref{main}.

\begin{coro}\label{k3}
Let $\pi:X\map\bP^1$ be a K3-fibered Calabi-Yau threefold. For $k>1$, let $C_1,\cdots,C_k$ be Cohen-Macaulay curves with arithmetic genus $p_a(C_i)\ge 1$ in distinct fibers, generically locally complete intersection. Then there exists a rank $2$ stable reflexive sheaf $\sE$ on $X$ with $c_1(\sE)=0$ and $c_2(\sE)=[\cup C_i]$.
\end{coro}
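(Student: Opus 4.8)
The plan is to apply Theorem \ref{main} directly to the curve $C=\bigcup_{i=1}^k C_i$, taking for $D$ a general fiber of $\pi$. Concretely, pick $p\in\bP^1$ avoiding the finitely many points $\pi(C_1),\dots,\pi(C_k)$ and set $D=\pi^{-1}(p)$, a K3 surface disjoint from $C$. Since $D$ is a fiber we have $N_{D/X}=\sO_D$ and $\sO_X(D)=\pi^*\sO_{\bP^1}(1)$; in particular $H^1(\sO_D)=0$ because $D$ is a K3 surface, so by Remark (2) following Theorem \ref{main} the conclusion remains available even though a fiber is not ample. It then remains to verify the three substantive hypotheses.

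First, $C$ is a disjoint union (the $C_i$ lie in distinct fibers) of Cohen--Macaulay, generically locally complete intersection curves, hence is itself Cohen--Macaulay and generically l.c.i. Second, every section of $\sO_X(D)=\pi^*\sO_{\bP^1}(1)$ is the pullback of a section of $\sO_{\bP^1}(1)$, so its zero locus is a single fiber; as $C$ meets $k\ge 2$ distinct fibers, no nonzero such section can vanish along $C$, giving $H^0(\sI_C(D))=0$. Third, $C\cap D=\emptyset$ is (trivially) zero dimensional and is contained in the zero locus of the nowhere-vanishing section $1$ of $N_{D/X}=\sO_D$; equivalently the required injection $\sO_D\hookrightarrow\sI_{\emptyset\subset D}(D)=\sO_D$ is the identity.

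The remaining and main point is to produce a section of $\omega_C(-D)$ generating it away from a finite set. Because $C$ is disjoint from $D$ and each $C_i$ is contracted to a point by $\pi$, the bundle $\sO_X(-D)|_{C_i}$ is trivial, so $\omega_C(-D)\cong\omega_C=\bigoplus_i\omega_{C_i}$ and it suffices to treat each $C_i$ separately. Writing $S_i$ for the K3 fiber through $C_i$, the curve $C_i$ is a Cartier divisor in the smooth surface $S_i$, and adjunction together with $\omega_{S_i}=\sO_{S_i}$ gives $\omega_{C_i}\cong\sO_{S_i}(C_i)|_{C_i}$. From the structure sequence
\[ 0\map \sO_{S_i}\map \sO_{S_i}(C_i)\map \omega_{C_i}\map 0 \]
and $H^1(\sO_{S_i})=0$ one gets a surjection $H^0(\sO_{S_i}(C_i))\to H^0(\omega_{C_i})$, so sections of $\omega_{C_i}$ are exactly restrictions of members of $|C_i|$, a linear system of dimension $h^0(\omega_{C_i})=p_a(C_i)\ge 1$. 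I would then take a general member $C_i'\in|C_i|$ with $C_i'\ne C_i$: the corresponding section of $\omega_{C_i}$ vanishes precisely along $C_i'\cap C_i$, which is finite provided $C_i'$ shares no component with $C_i$, and hence generates $\omega_{C_i}$ off that finite set.

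The hard part is exactly this last transversality: one must rule out that a component of $C_i$ is a \emph{fixed} component of $|C_i|$, for then every member would vanish on it. This is where $p_a(C_i)\ge 1$ is used, guaranteeing $\dim|C_i|\ge 1$ so that a genuinely moving section exists; for integral $C_i$, or more generally when $C_i$ is nef in $S_i$, the general member meets $C_i$ properly and the construction goes through. Granting the section $\theta\in H^0(\omega_C(-D))$, Theorem \ref{main} produces a stable reflexive sheaf $\sE$ deforming $\sO_X\oplus\sI_C$, which has rank $2$. Finally the Chern classes are read off from $\sO_X\oplus\sI_C$ and are deformation invariant: $c_1(\sE)=0$ and $c_2(\sE)=c_2(\sI_C)=[C]=[\bigcup_i C_i]$, as claimed.
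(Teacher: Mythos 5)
Your proof is correct and follows exactly the paper's (very terse) argument: apply Theorem \ref{main} to $C=\bigcup C_i$ with $D$ a fiber of $\pi$ disjoint from $C$, invoking Remark (2) after Theorem \ref{main} so that $H^1(\sO_D)=0$ for the K3 fiber substitutes for ampleness. You supply considerably more verification than the paper does, and the one point you flag as delicate --- producing a section of $\omega_C(-D)\cong\omega_C$ generating it off a finite set, which can genuinely fail for a disconnected $C_i$ with $p_a(C_i)\ge 1$ (e.g.\ a higher-genus curve plus a disjoint $(-2)$-curve, where a component is a fixed component of $|C_i|$) --- is a real gap that the paper's own one-line proof also leaves unaddressed.
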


To show this, we simply apply Theorem \ref{main} to the curve $C=\cup C_i$, and take $D$ be a fiber of $\pi$ disjoint from the curves $C_i$. Here $D$ is not ample. However, since $D$ is a K3 surface, $H^1(\sO_D)=0$. Hence item (2) in the previous remark applies. Suppose every curves $C_i$ has arithmetic genus $1$, then the obtained sheaf is locally free.

Here is another example. Let $\pi:X\to B$ be an elliptic fibered Calabi-Yau threefold. Let $f$ be the fiber class. Then there exists rank $2$ stable bundle $\sE$ on $X$ with $c_1(\sE)=0$, $c_2(\sE)=nf$ with $n\ge 2$. In fact, this is exactly the pull-back of a stable bundle over $B$ with $c_1=0$ and $c_2=n$; one can also verify directly this pull-back bundle is stable. Suppose $\alpha$ is a horizontal curve in $X$. Then one can construct stable reflexive sheaves on $X$ with $c_1=0$ and $c_2=\alpha+nf$ under some mild conditions. However, the construction of a curve $C$ in the class $\alpha+nf$ fulfill the requirement in Theorem \ref{main} is technical and we shall not pursuit in this direction.

\vskip 5pt

In the remainder of this section, we shall prove the main result of this paper which is an application of Theorem \ref{main}.

\begin{thm}\label{result}
Let $D\subset X$ be a very ample divisor on a Calabi-Yau threefold $X$. Then there exists a rank $r\ge 2$ stable reflexive sheaf $\sE$ on $X$ with $c_1(\sE)=0$, $c_2(\sE)=n[D^2]$ and $c_3(\sE)=2nD^3$ for any $n\ge r$.
\end{thm}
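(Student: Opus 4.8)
The plan is to reduce Theorem~\ref{result} to Theorem~\ref{main} for the rank~$2$ case, and then to bootstrap to arbitrary rank $r$ by induction. For the base case $r=2$, the goal is to produce a Cohen-Macaulay curve $C\subset X$ in the class $n[D^2]$ that satisfies all three hypotheses of Theorem~\ref{main}: namely $H^0(\sI_C(D))=0$, the intersection $C\cap D$ is zero-dimensional and lies in the zero locus of a section of $N_{D/X}$, and $\omega_C(-D)$ admits a section generating it away from finitely many points. The natural candidate, given that $D$ is very ample, is to take $C$ to be a union of $n$ suitable curves. Since $[D^2]$ is the class of a complete intersection $D\cap D'$ for a second member $D'$ of the very ample linear system $|D|$, I would first understand the numerics: a curve in the class $n[D^2]$ should have $c_3(\sE)=2nD^3$ forced by the Serre construction, since $\deg(\omega_C) = 2p_a(C)-2$ and the genus of a complete-intersection-type curve in the class $n[D^2]$ is controlled by adjunction on $X$ (recall $K_X=0$).

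First I would arrange $C$ as a disjoint union of $n$ curves, each a complete intersection of two general members of $|D|$, chosen so that $C$ meets a fixed very ample $D$ in a zero-dimensional scheme. The condition that $C\cap D$ lie in the zero locus of a section of $N_{D/X}=\sO_D(D)$ is where very ampleness is essential: because $\sO_D(D)$ is very ampleand globally generated, its sections cut out general zero-dimensional subschemes of $D$, so I would choose the members of $|D|$ defining $C$ so that the finitely many intersection points $C\cap D$ sit inside the vanishing locus of some explicit section of $N_{D/X}$. To get $H^0(\sI_C(D))=0$, I would verify that no member of $|D|$ contains $C$; since $C$ has degree growing with $n$ while a single $D$-hyperplane section of $X$ is rigid in its numerics, a general configuration avoids this, and the Koszul/exact-sequence bookkeeping gives the vanishing. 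The section of $\omega_C(-D)$ generating it off a finite set comes from adjunction: on each complete-intersection component $C_i$ one has $\omega_{C_i}\cong \sO_{C_i}(2D-D)=\sO_{C_i}(D)$ after using $K_X=0$, hence $\omega_{C_i}(-D)\cong\sO_{C_i}$ is trivial and its constant section generates it everywhere, so the required section exists on the disjoint union.

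Once Theorem~\ref{main} delivers a rank~$2$ stable reflexive sheaf $\sE_2$ with $c_1=0$, $c_2=n[D^2]$, and $c_3=2nD^3$, the inductive step raises the rank by one while preserving stability with respect to \emph{all} polarizations. The mechanism I expect is to take an extension of the previous rank $r-1$ sheaf by $\sO_X$ twisted appropriately, or rather to re-run the Serre-construction/deformation argument with the curve $C$ enlarged by one additional complete-intersection component, so that each increment of the rank adds one unit to $n$ and the Chern-class pattern $c_2=n[D^2]$, $c_3=2nD^3$ is maintained with $n\ge r$. Concretely, assuming a rank $r-1$ stable reflexive $\sE_{r-1}$ exists for $n-1\ge r-1$, I would build $\sE_r$ as a deformation of $\sE_{r-1}\oplus \sI_{C'}$ for a single extra curve $C'$, and invoke the same openness of stability and reflexivity used in the proof of Theorem~\ref{main} to conclude stability of the generic deformation.

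The main obstacle I anticipate is the geometric construction of the curve $C$ satisfying \emph{all three} hypotheses simultaneously, especially the middle condition that $C\cap D$ be contained in the zero locus of a section of $N_{D/X}$; balancing this against the vanishing $H^0(\sI_C(D))=0$ requires choosing the configuration of hyperplane sections carefully, since making $C$ meet $D$ in a controlled special subscheme pulls against the genericity needed for the global-section vanishing. The second delicate point is ensuring that the inductive extensions genuinely remain reflexive (not merely torsion-free) and stable for \emph{every} polarization; the argument must show that the destabilizing-subsheaf analysis of the proof of Theorem~\ref{main}, which crucially uses $H^1(\sO_X)=0$ and the vanishing of global sections, carries through unchanged at each rank. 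Verifying that the Chern-class computation $c_3=2nD^3$ is forced---rather than merely consistent---will require a careful Grothendieck--Riemann--Roch or Serre-construction computation of $\deg(\omega_C\otimes\sL_C^{-1})$, which I would treat as the routine but essential bookkeeping underpinning the whole construction.
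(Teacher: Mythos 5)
Your overall plan for the base case---reduce to Theorem~\ref{main} by exhibiting a disjoint union of $n$ complete-intersection curves in the class $n[D^2]$---is the paper's strategy, but two points do not hold up. First, adjunction for a complete intersection $C_i$ of two members of $|D|$ in a Calabi--Yau threefold gives $\omega_{C_i}\cong\sO_{C_i}(2D)$ (the sum of the two cutting divisors, not ``$2D-D$''), hence $\omega_{C_i}(-D)\cong\sO_{C_i}(D)$ has degree $D^3>0$ and is \emph{not} trivial. Your computation $\omega_{C_i}(-D)\cong\sO_{C_i}$ would force $2p_a(C)-2=nD^3$ and hence $c_3=nD^3$, contradicting the asserted $c_3=2nD^3$; the hypothesis of Theorem~\ref{main} still happens to hold because $\sO_{C_i}(D)$ is globally generated, but the slip is load-bearing for the Chern class bookkeeping you defer. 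Second, you name but do not resolve the tension between genericity and the requirement that $C\cap D$ lie in the zero locus of a \emph{single} section of $N_{D/X}$: for general complete intersections the $nD^3$ points of $C\cap D$ will not lie on one curve of class $[D^2]$ in $D$. The paper's resolution is a pencil: choose $S_1,\dots,S_n\in|D|$ in a pencil containing $D$ with base curve $C_0=D\cap S_1$, so that $S_i\cap D=C_0$ for every $i$; deforming $C_0$ inside each $S_i$ to an irreducible $C_i$ then forces $C_i\cap D\subset C_0$, which is precisely the zero locus of the section of $N_{D/X}$ cutting out $C_0$. This configuration also gives $H^0(\sI_C(D))=0$ for $n\ge 2$.

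The inductive step is where the proposal genuinely breaks from the paper and has a gap. You propose deforming $\sE_{r-1}\oplus\sI_{C'}$, a direct sum of two stable slope-zero sheaves of ranks $r-1$ and $1$. The stability argument of Theorem~\ref{main} relies on one summand being $\sO_X$: a destabilizer of the deformed sheaf specializes to a saturated slope-zero subsheaf of the central fiber, which is then pinned down to be $\sO_X$ (using $H^1(\sO_X)=0$) and ruled out by $H^0(\sE_t)=0$. For $\sE_{r-1}\oplus\sI_{C'}$ a destabilizer can have any intermediate rank, and you give neither the family (which $\Ext$-group the deformation lives in) nor a reason the limiting analysis closes. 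The paper instead preserves the shape $\sO_X\oplus(\text{rank-}r\text{ stable})$ at every stage: it forms $\sF_r=\ker(\sE_r\map\sO_{C'})$, an elementary modification of the already-constructed rank-$r$ sheaf along one more curve of the pencil (raising $c_2$ by $[D^2]$ and making $\sF_r$ non-reflexive), and applies Proposition~\ref{ind} to $\sO_X\oplus\sF_r$. The substantive work there is verifying that $\Ext^1(\sF_r,\sO_X(-D_r))\to\Ext^1(\sF_r,\sO_X)$ hits a class whose extension is \emph{reflexive}; this is done by showing every non-reflexive extension factors through $\Ext^1(\sE_r,\sO_X)$ and that the map $\phi$ from $\Ext^1(\sE_r,\sO_X(-D_r))$ is not surjective (a dimension count descending through the inductive diagrams). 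None of this machinery appears in your sketch, so the induction as proposed does not go through.
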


We shall prove it by induction on the rank $r$. First we consider rank $2$ case.

\begin{prop}\label{ini}
Let $D\subset X$ be a very ample divisor on a Calabi-Yau threefold $X$. Then there exists a rank $2$ stable reflexive sheaf $\sE$ on $X$ with $c_1(\sE)=0$, $c_2(\sE)=n[D^2]$ and $c_3(\sE)=2nD^3$ for any $n\ge 2$.
\end{prop}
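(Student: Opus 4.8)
The plan is to reduce the statement to a single application of Theorem \ref{main}: I will exhibit a Cohen--Macaulay, generically locally complete intersection curve $C\subset X$ of class $n[D^2]$ satisfying all of its hypotheses, and take $\sE$ to be the resulting stable reflexive deformation of $\sO_X\oplus\sI_C$. The Chern classes are then automatic. Since deformation preserves the Chern character and $\sE$ has rank $2$, we have $\mathrm{ch}(\sE)=\mathrm{ch}(\sO_X)+\mathrm{ch}(\sI_C)=2-\mathrm{ch}(\sO_C)$, so $c_1(\sE)=0$, $c_2(\sE)=[C]$, and (using $c_1=0$) $c_3(\sE)=2\,\mathrm{ch}_3(\sE)=-2\,\mathrm{ch}_3(\sO_C)$. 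On the Calabi--Yau $X$, Grothendieck--Riemann--Roch gives $\mathrm{ch}_3(\sO_C)=\chi(\sO_C)$, so $c_3(\sE)=-2\chi(\sO_C)$. Hence it suffices to build $C$ with $[C]=n[D^2]$ and $\chi(\sO_C)=-nD^3$.

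For the curve I would take a disjoint union of complete intersections organized by a pencil. Fix general members $D',A_1,\dots,A_n\in|D|$, and let $B=D\cap D'$, a curve of class $[D^2]$. Choose $n$ distinct members $D'_1,\dots,D'_n$ of the pencil $\langle D,D'\rangle\subset|D|$, all different from $D$, and set $B_i=A_i\cap D'_i$ and $C=\bigsqcup_{i=1}^n B_i$. Each $B_i$ is a complete intersection of two members of $|D|$, hence locally complete intersection of class $[D^2]$ with $\omega_{B_i}=\sO_X(2D)|_{B_i}$, so $p_a(B_i)=D^3+1$ by adjunction. Therefore $[C]=n[D^2]$ and $\chi(\sO_C)=\sum_i\bigl(1-p_a(B_i)\bigr)=-nD^3$, exactly the required numerical invariants.

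The reason for routing the $D'_i$ through the pencil $\langle D,D'\rangle$ is that every member of this pencil restricts on $D$ to a nonzero multiple of $s_{D'}|_D$, so $D'_i\cap D=B$ for each $i$ and hence $B_i\cap D=A_i\cap B$. Consequently $C\cap D=\bigsqcup_i(A_i\cap B)$ is zero-dimensional and contained in $B=D\cap D'$, and the section $s_{D'}|_D\in H^0(N_{D/X})$, whose zero locus is exactly $B$, vanishes on $C\cap D$; this secures the hypothesis of Theorem \ref{main} about $C\cap D$ lying in the zero locus of a section of $N_{D/X}$. The remaining hypotheses I would check by genericity: the $B_i$ are pairwise disjoint because $B_i\cap B_j\subset A_i\cap A_j\cap B$, which is empty for general $A_i,A_j$ (two general sections of $\sO_B(D)$ have disjoint zeros); one has $H^0(\sI_C(D))=0$ because $H^0(\sI_{B_i}(D))=\langle s_{A_i},s_{D'_i}\rangle$ and, for $n\ge2$ and general choices, these lines in $|D|$ have no common point; and $\omega_C(-D)$ restricts to the very ample $\sO_{B_i}(D)$ on each component, so a general global section generates it away from finitely many points.

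The main obstacle is the tension between the hypothesis $H^0(\sI_C(D))=0$ and the requirement that $C\cap D$ lie in the zero locus of a section of $N_{D/X}$. Since $|N_{D/X}|$ is the restriction of $|D|$ to $D$, the latter amounts to finding a member of $|D|$ through the points $C\cap D$; for a naive disjoint union of complete intersections these points are general, and for large $n$ no member of $|D|$ contains them, while forcing $C$ into a member of $|D|$ to control $C\cap D$ immediately violates $H^0(\sI_C(D))=0$. The pencil construction is precisely what threads this needle: it pins $C\cap D$ onto the single fixed base curve $B=D\cap D'$, so that the one auxiliary divisor $D'$ handles the $N_{D/X}$ condition for every $n$, while the components $B_i$ remain spread across the distinct surfaces $D'_i$ so that no member of $|D|$ contains all of $C$. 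The only genuinely delicate point is then the genericity argument ensuring disjointness and $H^0(\sI_C(D))=0$ simultaneously, which is where the very ampleness of $D$, hence the largeness of $H^0(\sO_X(D))$, enters.
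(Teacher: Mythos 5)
Your proposal is correct and follows essentially the same route as the paper: the paper also takes $C$ to be a disjoint union of $n$ curves, each a general deformation (within its surface) of the base curve $C_0=D\cap S_1$ of a pencil in $|D|$ spanned by $D$ and one other member, so that $C\cap D$ is pinned to $C_0$ while no single member of $|D|$ contains $C$. Your version merely makes the curves explicit as complete intersections $A_i\cap D'_i$ and spells out the genericity checks (disjointness, $H^0(\sI_C(D))=0$) and the $c_3$ computation that the paper asserts in one line.
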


\begin{proof}
We shall apply Theorem \ref{main} to construct this sheaf. It suffices to find a curve in the class $n[D^2]$ so that it satisfies the assumptions in Theorem \ref{main}.



Since $D$ is very ample, we can assume that it is effective and smooth by Bertini's theorem. We can also find another smooth divisor $S_1\subset X$ in $|D|$ so that they span a $\bP^1$ in the linear system $|D|$. Let $C_0=D\cap S_1$. Let $S_2,\cdots, S_n$ be divisors representing different points in this $\bP^1$, distinct from that of $D$ and $S_1$. Then $C_0\subset S_i$ for all $i$.

Again since $D$ is very ample, the divisors $C_0\subset S_i$ are also very ample. Hence we can deform $C_0$ to irreducible curves $C_i\subset S_i$ so that $C_i\cap D$ are zero dimensional. Moreover, we can arrange these curve $C_i$ so that they are pairwise disjoint. Now let $C=\cup_{i=1}^n C_i$. Since $n\ge 2$, $H^0(\sI_C(D))=0$. Moreover, the scheme-theoretic intersection $C\cap D$ is contained in $C_0$, which is the zero locus of a section of $N_{D/X}$. Using Riemann-Roch, we know that $\deg\omega_{C_i}(-D)=D^3>0$. Hence there is a section of $\omega_{C_i}(-D)$ which generates $\omega_{C_i}(-D)$ generically. Now we can apply Theorem \ref{main} to the curve $C$ and the ample divisor $D$ to get a stable reflexive sheaf $\sE$ from a deformation of $\sO_X\oplus \sI_C$. By a straightforward calculation, $c_3(\sE)=2p_a(C)-2=2nD^3$.
\end{proof}

We shall use the following generalization of Theorem \ref{main} to do the induction.

\begin{prop}\label{ind}
Let $\sF$ be a stable sheaf on a Calabi-Yau threefold $X$ with $c_1(\sF)=0$ and not reflexive. Let $D\subset X$ be an ample divisor. Suppose it satisfies the following conditions:

(1) there exists an extension $0\map\sF\map\widetilde\sF\map\sO_D\map 0$ with $H^0(\widetilde\sF)=0$, and

(2) the natural homomorphism $\Ext^1(\sF,\sO_X(-D))\to\Ext^1(\sF,\sO_X)$ is nonzero, and its image contains an element corresponding to a reflexive sheaf.

Then $\sO_X\oplus \sF$ deforms to a stable reflexive sheaf $\sE$ fitting into
\[ 0\map \sO_X(-D)\map\sE\map\widetilde{\sF}\map 0.\]
\end{prop}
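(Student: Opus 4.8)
The plan is to mimic almost verbatim the argument of Theorem \ref{main}, with $\sF$ playing the role formerly played by the ideal sheaf $\sI_C$. The key observation is that the proof of Theorem \ref{main} never really used the specific structure of $\sI_C$ beyond three features: that it sits in a sequence of type (\ref{a1}) with vanishing global sections, that one can produce a compatible pair of extensions $\xi_0,\xi_1$ by pulling back along the Serre--construction diagrams, and that the limiting sheaf $\sE_0$ is reflexive. Conditions (1) and (2) are precisely the abstractions of exactly these features, so I would set up the two commutative diagrams first. Using condition (2), I choose $\theta\in\Ext^1(\sF,\sO_X(-D))$ whose image under the natural map $g:\Ext^1(\sF,\sO_X(-D))\to\Ext^1(\sF,\sO_X)$ is nonzero and corresponds to a reflexive sheaf; this gives the diagram
\begin{equation}
\begin{CD}\notag
0@>>>\sO_X(-D)@>>>G@>>>\sF@>>>0\\
@. @VVV @VVV @| \\
0@>>>\sO_X@>>>\sE_0@>>>\sF@>>>0\\
\end{CD}
\end{equation}
with $\sE_0$ reflexive by the choice of $g(\theta)$.

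Next I would produce $\sE_1$. Since $D$ is ample and $X$ is Calabi-Yau, Kodaira vanishing gives $\Ext^2(\sO_D,\sO_X(-D))=0$, so applying $\Hom(-,\sO_X(-D))$ to the defining sequence $0\to\sF\to\widetilde\sF\to\sO_D\to0$ of condition (1) shows that the natural map $\Ext^1(\widetilde\sF,\sO_X(-D))\to\Ext^1(\sF,\sO_X(-D))$ is surjective. Hence I can lift $\theta$ to some $\widetilde\theta$ and obtain
\begin{equation}
\begin{CD}\notag
0@>>>\sO_X(-D)@>>>G@>>>\sF@>>>0\\
@. @| @VVV @VVV \\
0@>>>\sO_X(-D)@>>>\sE_1@>>>\widetilde\sF@>>>0\\
\end{CD}
\end{equation}
Taking cokernels of the middle vertical maps in both diagrams yields two extension classes $\xi_0,\xi_1\in\Ext^1(\sO_D,G)$, and I define the family $\sE_t$ by the class $(1-t)\xi_0+t\xi_1$, so that $\sE_t$ fits into $0\to\sO_X(-D)\to\sE_t\to\widetilde\sF\to0$ for $t\ne0$.

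The stability argument is then identical to Theorem \ref{main}. By openness of semistability \cite[Theorem 2.8]{Mar76}, $\sE_t$ is semistable for generic $t$; openness of reflexivity \cite[Proposition 1.1.10]{HL10} makes $\sE_t$ reflexive for all but finitely many $t$. If $\sE_t$ were strictly semistable I would extract a destabilizing sequence $0\to\sL_1\to\sE_t\to\sL_2\to0$ with $c_1(\sL_i)=0$, double-dualize to assume $\sL_1$ invertible, specialize to $t=0$ to force $\sL_1=\sO_X$ since $H^1(\sO_X)=0$, and then derive a contradiction from $H^0(\sE_1)=0$ (condition (1)) via upper semicontinuity, which gives $H^0(\sE_t)=0$ hence $H^0(\sL_1)=0$. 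The one genuinely new point is the \emph{stability} of $\sL_2$, or rather the possibility that the destabilizing quotient itself degenerates: here I expect the main subtlety to be verifying that the specialization-to-$\sO_X$ step still goes through when $\sF$ is merely stable rather than an ideal sheaf, i.e. that no destabilizing subsheaf other than $\sO_X$ can appear. The stability of $\sF$ with $c_1(\sF)=0$ is what rules out competing rank-one subsheaves of slope $0$ coming from $\sF$, so the argument closes exactly as before, and $\sE_t$ is a stable reflexive deformation of $\sO_X\oplus\sF$ for generic $t$.
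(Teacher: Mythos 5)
Your proposal is correct and is essentially identical to the paper's argument: the paper's entire proof of this proposition is the single remark that it ``follows from that of Theorem \ref{main}, with the ideal sheaf $\sI_C$ replaced by $\sF$,'' and your writeup carries out precisely that substitution, with conditions (1) and (2) supplying the two inputs ($H^0(\widetilde\sF)=0$, and a class in the image of $\Ext^1(\sF,\sO_X(-D))\to\Ext^1(\sF,\sO_X)$ corresponding to a reflexive $\sE_0$) that the Serre-construction data supplied before. The one subtlety you flag --- that for $\rk\sF>1$ the destabilizing subsheaf need not have rank one, so one must invoke the stability of $\sF$ to force the saturated destabilizer specializing to $t=0$ to be $\sO_X$ --- is a genuine point, and it is in fact elided by the paper itself.
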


The proof follows from that of Theorem \ref{main}, with the ideal sheaf $\sI_C $ replaced by $\sF$ here. We shall not repeat it.

\vskip 5pt

Now we give the proof of Theorem \ref{result}.

\begin{proof}[Proof  of Theorem \ref{result}]
We prove it by induction on the rank $r$. For $r=2$, the result follows from Proposition \ref{ini}.

Now suppose for an $r\ge 2$ and every $n\ge r$, we have constructed rank $r$ stable reflexive sheaves $\sE_r$ from a deformation of $\sO_X\oplus \sF_{r-1}$, so that $c_1(\sE_r)=0$ and $c_2(\sE_r)=n[D^2]$. Furthermore, there exists an ample divisor $D_{r-1}$ in the linear system $|D|$, so that $\sE_r$ fits into the following diagram
\begin{equation}
\begin{CD}\label{comm4}
0@>>>\sO_X(-{D_{r-1}})@>>>G@>>>\sF_{r-1}@>>>0\\
@. @| @VVV @VVV \\
0@>>>\sO_X(-{D_{r-1}})@>>>\sE_r@>>>\widetilde{\sF_{r-1}}@>>>0\\
@.@.@VVV @VVV\\
@.@.\sO_{D_{r-1}}@=\sO_{D_{r-1}}
\end{CD}
\end{equation}
with $H^0(\widetilde{\sF_{r-1}})=0$.

In order to apply Proposition \ref{ind} to find a rank $r+1$ stable reflexive sheaf, we need to construct a sheaf $\sF_r$ which fulfill the assumptions there. For this, we make the following induction assumptions:

(A1) We can find other divisors $S_1,\cdots, S_n$ in $|D|$, distinct from $D_{r-1}$, so that they span a pencil that contains the divisor $D_{r-1}$. Let $C_0$ be the intersection of these divisors. The singular locus of $\sF_{r-1}$ is contained in the union $C=\cup_{i=1}^n C_i$ with $C_i\subset S_i$, which has the configuration type described in the proof of Proposition \ref{ini}. The singular locus of $\widetilde{\sF_{r-1}}$ is contained in $C_0\cup C$.

(A2) For every other divisor $S'$ in the pencil, and a curve $C'\subset S'$ which is disjoint from $C$, we can find a surjective homomorphism $\sF_{r-1}\to \sO_{C'}$.

To get a picture of these assumptions, it is helpful to look at the case $r=2$. Obviously, $\sF_{r-1}=\sF_1=\sI_C$ satisfies these assumptions (c.f. proof of Proposotion \ref{ini}), and $\widetilde{\sF_1}=\sI_W(D_1)$ with $W=C\cup C_0$ (c.f. Remark (3) after Theorem \ref{main}).

\vskip 5pt

Now we construct a rank $r$ stable sheaf $\sF_r$ with $c_2(\sF_r)=(n+1)[D^2]$ and satisfies the assumptions (A1) and (A2). We shall show that $\sF_r$ satisfies the conditions in Proposition \ref{ind} as well. Therefore $\sO_X\oplus \sF_r$ deforms to a rank $r+1$ stable reflexive sheaf $\sE_{r+1}$.

Based on (A2), we can find a curve $C'\subset S'$, linearly equivalent to $C_0$ in $S'$, and a surjective homomorphism $\sF_{r-1}\to \sO_{C'}$. Accordingly, from diagram (\ref{comm4}), we can find a surjective homomorphism $\widetilde{\sF_{r-1}}\to \sO_{C'}$. For $r=2$, this is given by $\sI_W(D_1)\to \sO_{C'}$, which is obvious since the restriction $\sI_W(D_1)|_{C'}$ is the direct sum of $\sO_{C'}$ and torsion part. It induces a surjective homomorphism $\sE_r\to \sO_{C'}$. Define
\[ \sF_r=\ker(\sE_r\map \sO_{C'}). \]
Evidently, $c_1(\sF_r)=0$, $c_2(\sF_r)=c_2(\sE_r)-c_2(\sO_{C'})=(n+1)[D^2]$, and $\sF_r$ is not reflexive. Moreover, $\sF_r$ is stable because $\sE_r$ is stable. It is clear that $\sF_r$ satisfies the induction assumptions (A1) and (A2).

Accordingly, we denote $\sK_{r-1}$ (resp. $\widetilde{\sK_{r-1}}$, $G'$) be the kernel of ${\sF_{r-1}}\to \sO_{C'}$ (resp. $\widetilde{\sF_{r-1}}\to \sO_{C'}$, $G\to\sO_{C'}$). Then we obtain the following
\begin{equation}
\begin{CD}\label{comm5}
0@>>>\sO_X(-{D_{r-1}})@>>>G'@>>>\sK_{r-1}@>>>0\\
@. @| @VVV @VVV \\
0@>>>\sO_X(-{D_{r-1}})@>>>\sF_r@>>>\widetilde{\sK_{r-1}}@>>>0\\
@.@.@VVV @VVV\\
@.@.\sO_{D_{r-1}}@=\sO_{D_{r-1}}
\end{CD}
\end{equation}

Now we verify condition (1) in Proposition \ref{ind}. Notice that we can find (infinitely many) divisor $D'$, linearly equivalent to $D$, so that there is an exact sequence
\[0\map \sK_{r-1}\map{(\sK_{r-1})'}\map\sO_{D'}\map 0\]
with $H^0((\sK_{r-1})')=0$. By pushing forward this sequence via the injective homomorphism $\sK_{r-1}\to \widetilde{\sK_{r-1}}$, we obtain a sheaf $\widetilde{\sF'}$ fitting into
\begin{equation}
\begin{CD}\label{comm6}
0@>>>\sK_{r-1}@>>>{(\sK_{r-1})'}@>>>\sO_{D'}@>>>0\\
@. @VVV @VVV @| \\
0@>>>\widetilde{\sK_{r-1}}@>>>\widetilde{\sF'}@>>>\sO_{D'}@>>>0\\
@.@VVV @VVV\\
@.\sO_{D_{r-1}}@=\sO_{D_{r-1}}
\end{CD}
\end{equation}
Let $u:H^0(\sO_{D_{r-1}})\to H^1(\sK_{r-1})$ be the induce homomorphism from the left column. Choose $D'$ so that the composition of $u$ with $H^1(\sK_{r-1})\to H^1((\sK_{r-1})')$ is nonzero, hence injective. It implies that by the middle column, we have $H^0(\widetilde{\sF'})=0$.

Then consider the middle row in (\ref{comm5}),
\begin{equation}
0\map\sO_X(-{D_{r-1}})\map\sF_r\map\widetilde{\sK_{r-1}}\map 0.
\end{equation}
Applying $\Hom(\sO_{D'},-)$, since $D$ is ample, $\Ext^2(\sO_{D'},\sO_X(-{D_{r-1}}))=0$ by Kodaira vanishing theorem. We obtain a surjective homomorphism
\begin{equation}\label{surj}
\Ext^1(\sO_{D'},\sF_r)\map \Ext^1(\sO_{D'},\widetilde{\sK_{r-1}}).
\end{equation}
Notice that the central row in (\ref{comm6}) is an element in $\Ext^1(\sO_{D'},\widetilde{\sK_{r-1}})$, by the surjectivity of (\ref{surj}), we can find $\widetilde{\sF_{r}}$ fitting into
\begin{equation}
\begin{CD}\label{comm7}
@.\sO_X(-{D_{r-1}})@=\sO_X(-{D_{r-1}})\\
@.@VVV @VVV\\
0@>>>\sF_{r}@>>>\widetilde{\sF_{r}}@>>>\sO_{D'}@>>>0\\
@. @VVV @VVV @| \\
0@>>>\widetilde{\sK_{r-1}}@>>>\widetilde{\sF'}@>>>\sO_{D'}@>>>0\\
\end{CD}
\end{equation}
From the middle column, we get $H^0(\widetilde{\sF_{r}})=0$. Hence the middle row in (\ref{comm7}) is the required extension which satisfies condition (1) in Proposition \ref{ind}. We denote $D_r=D'$.

\vskip 5pt

For condition (2) in Proposition \ref{ind}, we consider the sequence
\[0\map\sF_r\map \sE_r\map\sO_{C'}\map 0. \]
Applying $\Hom(-,\sO_X(-D_r))$ and $\Hom(-,\sO_X)$, we get the following
\begin{equation}
\begin{CD}\label{comm38}
\Ext^1(\sE_r,\sO_X(-D_r))@>\phi>>\Ext^1(\sF_{r},\sO_X(-D_r))@>\rho>>\Ext^2(\sO_{C'},\sO_X(-D_r))\\
 @VVV @VVf_{r}V @VVfV \\
\Ext^1(\sE_r,\sO_X)@>\phi'>>\Ext^1(\sF_{r},\sO_X)@>>>\Ext^2(\sO_{C'},\sO_X)\\
\end{CD}
\end{equation}
where $\phi$ and $\phi'$ are injective maps.

First we claim that every nonreflexive extension in $\Ext^1(\sF_{r},\sO_X)$ lies in the image of $\phi'$. Indeed, for any extension
\[0\map\sO_X\map \sH\map\sF_r\map 0 \]
so that $\sH$ is not reflexive. Taking double dual, we have
\begin{equation}
\begin{CD}\label{comm21}
0@>>>\sO_X@>>> \sH@>>>\sF_r@>>> 0 \\
@. @| @VVV @VVV  \\
0@>>>\sO_X@>>> (\sH^{\!\vee})^{\!\vee}@>h>>\sE_r  \\
@. @. @VVV @VVV  \\
@. @. *@>>>\sO_{C'}  \\
\end{CD}
\end{equation}
We want to show that $h$ is surjective. If not, let $(\sE_r)'$ be its image. Then $(\sE_r)'\neq \sF_r$ since $\sH$ is not reflexive. On the other hand, by the inclusion $\sF_r\subset (\sE_r)'\subset\sE_r$, and $\sE_r/\sF_r=\sO_{C'}$, we know that $\sQ=\sE_r/(\sE_r)'$ is a quotient of $\sO_{C'}$. Then we get
\[ 0\map (\sE_r)'\map\sE_r\map \sQ\map 0 \]
where $\sQ$ is a nonzero sheaf support at points. Applying $\Hom(-,\sO_X)$, we get
\[\Ext^1(\sE_r,\sO_X)\map\Ext^1((\sE_r)',\sO_X)\map\Ext^2(\sQ,\sO_X)=0. \]
The surjectivity of the first map implies that we can find a sheaf $\sK$ fitting into
\begin{equation}
\begin{CD}\label{comm21}
0@>>>\sO_X@>>> (\sH^{\!\vee})^{\!\vee}@>>>(\sE_r)' @>>>0 \\
@. @| @VVV @VVV  \\
0@>>>\sO_X@>>> \sK@>>>\sE_r @>>>0 \\
@. @. @VVV @VVV  \\
@. @. \sQ@>>>\sQ  \\
\end{CD}
\end{equation}
This is not possible. Since $(\sH^{\!\vee})^{\!\vee})$ is a reflexive sheaf on a smooth threefold, it admits a two term locally free resolution. Therefore $\Ext^1(\sQ, (\sH^{\!\vee})^{\!\vee})=0$, it implies that $\sK\cong\sQ\oplus(\sH^{\!\vee})^{\!\vee}$, which contradicts to the middle row. Therefore $h$ is surjective, and every nonreflexive extension in $\Ext^1(\sF_{r},\sO_X)$ lies in the image of $\phi'$ which proves the claim.

Next we show that $\phi$ in (\ref{comm38}) is not surjective.

Using diagram (\ref{comm4}), we obtain
\begin{equation}\label{dim1}
\dim\Ext^1(\sE_r,\sO_X(-{D_r}))=\dim\Ext^1(\sF_{r-1},\sO_X(-{D_r}));
\end{equation}
and using (\ref{comm5}), we get
\begin{equation}\label{dim2}
\dim\Ext^1(\sF_r,\sO_X(-{D_r}))=\dim\Ext^1({\sK_{r-1}},\sO_X(-{D_r}));
\end{equation}
By the sequence
\[ 0\map{\sK_{r-1}}\map\sF_{r-1}\map\sO_{C'}\map 0 \]
we have
\[ 0\map\Ext^1(\sF_{r-1},\sO_X(-{D_r}))\to \Ext^1({\sK_{r-1}},\sO_X(-{D_r}))\map * \]
By induction, we know that
\[\dim \Ext^1(\sF_{r-1},\sO_X(-{D_r}))<\dim \Ext^1({\sK_{r-1}},\sO_X(-{D_r})).\]
Combined with (\ref{dim1}) and (\ref{dim2}), we get
\[\dim\Ext^1(\sE_r,\sO_X(-{D_r}))< \dim\Ext^1(\sF_r,\sO_X(-{D_r})).\]
Hence $\phi$ is not surjective.

To verify condition (2) in Proposition \ref{ind}, note that $C'$ does not contained in $D_r$, it implies that $f$ in (\ref{comm38}) is injective. Therefore, we can find $\theta\in \Ext^1(\sF_r,\sO_X(-D_r))$ which is not in the image of $\phi$. Since $f$ is injective, $f(\rho(\theta))$ is nonzero. It follows that $f_r(\theta)$ is nonzero as well. Since $f_r(\theta)$ does not contained in the image of $\phi'$, we know that it corresponds to a reflexive sheaf.

Finally, applying Proposition \ref{ind} to the nonreflexive sheaf $\sF_r$, we obtain a stable reflexive sheaf of rank $r+1$ from the deformation of $\sO_X\oplus\sF_r$.
\end{proof}

\section{Further discussions}

For Calabi-Yau threefolds of Picard number $1$, it is relatively easier to verify directly the stability of a bundle. However, for general case, there is no standard procedure to check stability. This is the reason the deformation argument plays a key role.

As we already mentioned, the main Theorem of this paper is an application of the technical result Theorem \ref{main}. For special Calabi-Yau threefolds, e.g. elliptic fibered or branched covering of Fano threefolds, one may use Theorem \ref{main} to construct stable bundles or reflexive sheaves with other given Chern classes, as we briefly did in Corollary \ref{k3} and the discussion afterwards. It is also interesting to compare the result with the spectral cover construction for elliptic Calabi-Yau \cite{FMW99,GHY14,AC12}, and results obtained by elementary transformation \cite{Nak04}.

For rank $r\ge 3$, Maeda \cite[Theorem 2.5]{Mae90} showed that one can construct vector bundles of rank $r$ using a generalized Serre construction. We expect one can combine his construction with Theorem \ref{main} in this note to get some refined results on the existence of stable bundles instead of reflexive sheaves.

In Theorem \ref{result}, applying deformation of the representative curve in the class $n[D^2]$, one may get curves of lower arithmetic genus and therefore obtain reflexive sheaves with lower values of $c_3$. The existence of such deformation depends on the configuration of the curve and the Calabi-Yau threefold. We hope to come back to these questions in future research.

\bibliographystyle{alpha}

\begin{thebibliography}{FMW99}

\bibitem[AC12]{AC12}
Bj{\"o}rn Andreas and Gottfried Curio.
\newblock Spectral bundles and the {DRY}-conjecture.
\newblock {\em J. Geom. Phys.}, 62(4):800--803, 2012.

\bibitem[Art90]{Art90}
I.~V. Artamkin.
\newblock Deformation of torsion-free sheaves on an algebraic surface.
\newblock {\em Izv. Akad. Nauk SSSR Ser. Mat.}, 54(3):435--468, 1990.

\bibitem[DRY06]{DRY06}
M.~R. Douglas, R.~Reinbacher, and S.-T. Yau.
\newblock Branes, bundles and attactors: Bogomolov and beyond.
\newblock {\em Preprint math.AG/0604597v4}, 2006.

\bibitem[FMW99]{FMW99}
Robert Friedman, John~W. Morgan, and Edward Witten.
\newblock Vector bundles over elliptic fibrations.
\newblock {\em J. Algebraic Geom.}, 8(2):279--401, 1999.

\bibitem[GHY14]{GHY14}
Peng Gao, Yang-Hui He, and Shing-Tung Yau.
\newblock Extremal bundles on calabi-yau threefolds.
\newblock {\em Preprint arXiv:1403.1268}, 2014.

\bibitem[Gie88]{Gie88}
David Gieseker.
\newblock A construction of stable bundles on an algebraic surface.
\newblock {\em J. Differential Geom.}, 27(1):137--154, 1988.

\bibitem[Har80]{Hat80}
Robin Hartshorne.
\newblock Stable reflexive sheaves.
\newblock {\em Math. Ann.}, 254(2):121--176, 1980.

\bibitem[HL10]{HL10}
Daniel Huybrechts and Manfred Lehn.
\newblock {\em The geometry of moduli spaces of sheaves}.
\newblock Cambridge Mathematical Library. Cambridge University Press,
  Cambridge, second edition, 2010.

\bibitem[Huy95]{Huy95}
D.~Huybrechts.
\newblock The tangent bundle of a {C}alabi-{Y}au manifold---deformations and
  restriction to rational curves.
\newblock {\em Comm. Math. Phys.}, 171(1):139--158, 1995.

\bibitem[LY05]{LiYau05}
Jun Li and Shing-Tung Yau.
\newblock The existence of supersymmetric string theory with torsion.
\newblock {\em J. Differential Geom.}, 70(1):143--181, 2005.

\bibitem[Mae90]{Mae90}
Hidetoshi Maeda.
\newblock Construction of vector bundles and reflexive sheaves.
\newblock {\em Tokyo J. Math.}, 13(1):153--162, 1990.

\bibitem[Mar76]{Mar76}
Masaki Maruyama.
\newblock Openness of a family of torsion free sheaves.
\newblock {\em J. Math. Kyoto Univ.}, 16(3):627--637, 1976.

\bibitem[Nak04]{Nak04}
Tohru Nakashima.
\newblock A construction of stable vector bundles on {C}alabi-{Y}au manifolds.
\newblock {\em J. Geom. Phys.}, 49(2):224--230, 2004.

\bibitem[Tau84]{Tau84}
Clifford~Henry Taubes.
\newblock Self-dual connections on {$4$}-manifolds with indefinite intersection
  matrix.
\newblock {\em J. Differential Geom.}, 19(2):517--560, 1984.

\bibitem[Ver07]{Ver07}
Peter Vermeire.
\newblock Moduli of reflexive sheaves on smooth projective 3-folds.
\newblock {\em J. Pure Appl. Algebra}, 211(3):622--632, 2007.

\bibitem[Wit86]{Wit86}
Edward Witten.
\newblock New issues in manifolds of {${\rm SU}(3)$} holonomy.
\newblock {\em Nuclear Phys. B}, 268(1):79--112, 1986.

\bibitem[Yau93]{Yau93}
Shing-Tung Yau.
\newblock Open problems in geometry.
\newblock In {\em Differential geometry: partial differential equations on
  manifolds ({L}os {A}ngeles, {CA}, 1990)}, volume~54 of {\em Proc. Sympos.
  Pure Math.}, pages 1--28. Amer. Math. Soc., Providence, RI, 1993.

\end{thebibliography}

\end{document}